\newtheorem{thm}{Theorem}[section]
\newtheorem{lemma}[thm]{Lemma}
\newtheorem{prop}[thm]{Proposition}
\newtheorem{defn}[thm]{Definition}
\theoremstyle{definition}
\newtheorem{remark}[thm]{Remark}
\newtheorem{example}[thm]{Example}
\numberwithin{equation}{section}
\def\al{\alpha}
\def\be{\beta}
\def\ga{\gamma}
\def\de{\delta}
\def\la{\lambda}
\def\om{\omega}
\def\Ga{\Gamma}
\def\Z{\mathbb{Z}}
\def\R{\mathbb{R}}
\def\C{\mathbb{C}}
\def\N{\mathbb{N}}
\def\cH{\mathcal{H}}
\def\cK{\mathcal{K}}
\begin{document}
\title[The $J$-matrix method]{The $J$-matrix method}

\date{January 4, 2010}

\author{Mourad E.H. Ismail and Erik Koelink}
\address{Department of Mathematics, University of Central Florida, Orlando FL 32816, USA,
and King Saud University, Riyadh, Saudi
Arabia.}
\email{ismail@math.ucf.edu}
\address{Radboud Universiteit Nijmegen, IMAPP, FNWI, Heyendaalseweg 135, 6525 AJ Nijmegen, the Netherlands}
\email{e.koelink@math.ru.nl}

\dedicatory{dedicated to Dennis Stanton on his 60-th birthday}

\begin{abstract} Given an operator $L$ acting on a function space, the 
$J$-matrix method consists of finding a sequence $y_n$ of functions such that
the operator $L$ acts tridiagonally on $y_n$. Once such a tridiagonalization
is obtained, a number of characteristics of the operator $L$ can 
be obtained. In particular, information on eigenvalues and eigenfunctions, bound states,
spectral decompositions, etc. can be obtained in this way. We discuss
the general set-up and next two examples in detail; the 
Schr\"odinger operator with Morse potential and the Lam\'e equation. 
\end{abstract}

\keywords{tridiagonal operator, orthogonal polynomials, 
Schr\"{o}dinger operator with Morse potential, Lam\'e equation\\
Mathematics Subject Classification. 33C45, 42C05, 34L40}

\maketitle


\section{Introduction}\label{sec:introduction}

In many problems one is interested in the eigenfunctions
of an operator $L$ acting on some function space, or more
generally on the spectral decomposition of such an
operator when $L$ is self-adjoint. The purpose
of the paper is to give an introduction to a method that
has been successfully used on several occasions and at several places
in the literature but without a rigorous proof.  This method is known as the $J$-matrix
method or as tridiagonalization. A $J$-matrix, or a Jacobi operator,  
is a tridiagonal operator on some finite dimensional Hilbert space or on 
the sequence space $\ell^2(\N)$, which is usually assumed to
be symmetric and having no non-trivial closed reducing subspaces.
The last conditions are in general not imposed in this paper. 
A tridiagonalization of an operator $L$ acting on some function space is
given by a set of functions $\{ y_n\}_{n=0}^\infty$ 
such that $L$ acting on these functions is tridiagonal with respect to these
functions, i.e. such that \eqref{eq:gentridiagonalform} holds.
Note that in the particular case that the upper and lower diagonal
term vanishes, this just means that the functions $y_n$ are 
eigenfunctions for the operator $L$. 
Since there is an intimate relation between orthogonal polynomials
and three-term recurrence relations, see e.g. \cite{Chih}, \cite{Isma},
\cite{Koel}, \cite{Szeg}, \cite{Tesc}, in such a way that orthogonality properties of the
polynomials correspond
to the spectral properties of the corresponding Jacobi operator,
this can then be used to find information on eigenfunctions, spectral properties,
etc,  of the original operator $L$. 

This method is frequently used in physical and chemical models, see
e.g. \cite{AlHa-JPA2000}, \cite{AlHa-AnnPhys2004}, \cite{AlHa-PhysLett2004},
\cite{AlHa-AnnPhys2005}, \cite{AlHaBAAA}, \cite{BendD}, \cite{Broa-PhysRevA1978},
\cite{Broa-PhysRevA1982}, \cite{Broa-LNM}, \cite{Dies}, \cite{Hayd},
\cite{KnapD}, \cite{Mung}, \cite{SzmyZM}, \cite{YamaR-PhysRevA} and
references given there. It concerns mostly one-dimensional models, and 
the potentials and Hamiltonians discussed include
sextic, harmonic oscillator, (Dirac-) Coulomb, (Dirac-) Morse, etc.
Usually the papers mentioned start out with the operator $L$ to be
analyzed, and occasionally with the form of the polynomials prescribed,
e.g. as in  \cite{BendD} where the $y_n$ are monomials times a 
fixed function. 
This method is also closely related to the Lanczos algorithm in 
numerical analysis, see e.g. \cite[Ch.~2]{CullW}, and to related Krylov subspaces. 

The purpose of the paper is to discuss the method of tridiagonalization 
in a general fashion and to consider 
the case of the Lam\'e type operator, showing that
it can be tridiagonalized using Chebychev polynomials. This is motivated by the classical
theorem of Bochner \cite{Boch}, recalled in 
Theorem \ref{thm:Bochner}, which classifies all orthogonal
polynomials that are eigenfunctions to a second order differential
operator, see also \cite[Ch.~20]{Isma} for generalizations to difference
operators, and by the classification theorem of Al-Salam and
Chihara \cite{AlSaC}, recalled in Theorem \ref{thm:AlSalamChihara}, 
of orthogonal polynomials whose derivative can
be expressed in a simple way in terms of the orthogonal polynomials
themselves. In general it is difficult to say a priori if an operator can
be tridiagonalized, but in Section \ref{sec:generalsetup} we
prove this for a special class of operators including
second order differential and difference operators with
polynomial coefficients of some degree, and we discuss
an explicit example in Sections \ref{ssec:SchrodingerMorsepotential} and \ref{ssec:Lame}. 
If there is a way
to transform, e.g. by conjugation and/or change of variables,
to such a specific operator, then we can tridiagonalize the
resulting operator, as is the case for the examples 
in \S \ref{sec:secondorderdiffoper}. 

It should be noted that a Jacobi operator has simple spectrum, and
that conversely a self-adjoint operator with simple spectrum
can be realized as a Jacobi operator, see \cite[Ch.~VII]{Ston},
assuming that there are no non-trivial (closed) reducing 
subspaces, see also \cite{Bere}.  This is of particular interest in case of
the Schr\"odinger operator, where one can make use
of scattering theory in order to determine its 
spectral decomposition. 
In case both the tridiagonalization procedure works and 
the spectral decomposition can be made explicit by e.g. an
integral transformation, the methods can be linked
to each other leading to results for the special functions
and orthogonal polynomials involved and we discuss
an example for the Schr\"odinger equation with 
Morse potential due to Broad and Diestler, see 
\cite{Broa-LNM}, \cite{Dies}, 
\cite{Broa-PhysRevA1978}, \cite{Broa-PhysRevA1982},
\cite{KnapD}, as well as \cite{Koor-LNM}. 

The contents of the paper are as follows. In 
Section \ref{sec:generalsetup} we discuss a general
set-up for tridiagonalizable operators. In
Section \ref{sec:secondorderdiffoper} we restrict to
second order differential operators, where 
in particular we discuss the Broad-Diestler example
and the case of the Lam\'e operator. 

We want to point out that in many cases which 
are considered there is a link to the bispectral
problem, see e.g. \cite{Grun} for an introduction, and
that the tridiagonalization can be used for both the
operator in the geometric variable as for the operator
in the spectral variable. It is also to be pointed
out that one can also tridiagonalize (second order) difference operators,
which are included in the general scheme of 
Section \ref{sec:generalsetup}, and that one important 
example is already to be found in Groenevelt \cite{Groe} for the
case of the Wilson functions and the associated difference
operator. 
Finally, we want to mention two, closely related, possible extensions that can 
be useful as well. First, one can relate an operator to a doubly infinite
Jacobi matrix (i.e. acting on $\ell^2(\Z)$ instead of on $\ell^2(\N)$), 
see e.g. \cite{MassR}, \cite{Koel}, and \cite[Ch.~VII]{Bere}. 
As indicated by Berezanski{\u\i} \cite[Ch.~VII]{Bere} one can also consider this
case as $2\times 2$-matrix-valued variant of tridiagonalization, and this
can then be looked at from a matrix analogue of the tridiagonal 
situation, see e.g. \cite{DuraLR} for an introduction to 
matrix-valued orthogonal polynomials. This is useful for
such operators as the Dirac operator, see \cite{AlHa-AnnPhys2004},
\cite{AlHa-PhysLett2004}, \cite{Mung} and
also \cite{DuraG} in this context.


\section{The general set-up}\label{sec:generalsetup}

We consider first a special class of second order operators
that can be tridiagonalized, which is done in Section
\ref{ssec:motivationdef}. In Section \ref{ssec:symmetricTDoperators}
we moreover assume that this operator is symmetric, and we
consider possible self-adjoint extensions and their spectrum.

\subsection{Motivation and definition}\label{ssec:motivationdef}

Consider a linear operator $L$ acting on a suitable 
function space; typically $L$ is a differential
operator, or a difference operator. 
We look for linearly independent functions $\{ y_n\}_{n=0}^\infty$ 
such that $L$ is tridiagonal with respect to these
functions, i.e. there exist constants $A_n$, $B_n$, $C_n$ 
($n\in\N$)  such that 
\begin{equation}\label{eq:gentridiagonalform}
\begin{split}
L\, y_n &\, =\,  A_n\, y_{n+1} + B_n\, y_n + C_n\, y_{n-1}, \quad n\geq 1, \\
L\, y_0 &\, =\,  A_0\, y_1 + B_0\, y_0.
\end{split}
\end{equation}
We combine both equations by assuming $C_0=0$. 
It follows that $\sum_{n=0}^\infty p_n(z) \, y_n$ is 
a formal eigenfunction of $L$ for the eigenvalue $z$ if
$p_n$ satisfies
\begin{equation}\label{eq:recurrencepn}
z\, p_n(z) = C_{n+1}\, p_{n+1}(z) + B_n\, p_n(z) + A_{n-1}\, p_{n-1}(z)
\end{equation}
for $n\in \N$ with the convention $A_{-1}=0$. In case $C_n\not= 0$ for
$n\geq 1$, we can define $p_0(z)=1$ and use \eqref{eq:recurrencepn}
recursively to find $p_n(z)$ as a polynomials of degree $n$ 
in $z$. In case $A_n C_{n+1}>0$, $B_n\in\R$, $n\geq 0$, the polynomials $p_n$ are orthogonal
with respect to a positive measure on $\R$, and the measure
and its support then can give information on $L$ in case
$\{ y_n\}_{n=0}^\infty$ gives a basis for the function space
on which $L$ acts, or for $L$ restricted to the closure of the 
span $\{ y_n\}_{n=0}^\infty$ (which depends on the function
space under consideration).

We now consider a more specific form of the operator $L$.
Let $S$ be a linear operator acting on a suitable function space 
including the polynomials. We assume that $S$ preserves the space of 
polynomials, and that $S$ lowers the degree by $1$, i.e.
$S\, x^k = d_k x^{k-1}$, $k\in\N$, with $d_k\not= 0$ for
$k\geq 1$ and $d_0=0$. 
Similarly, $T$ is a linear operator acting on suitable function spaces 
including the polynomials. We assume that $T$ preserves the space of 
polynomials, and that $T$ lowers the degree by $2$, i.e.
$T\, x^k = d'_k x^{k-2}$, $k\in\N$, with $d'_k\not= 0$ for
$k\geq 2$ and $d'_0=d'_1=0$.

\begin{example} $T=S^2$, and $S= \frac{d}{dx}$, the $q$-derivative $S=D_q$,
or any other $q$-derivative, see e.g. \cite{Isma}. 
\end{example}

We now consider the operator $L$ on suitable function spaces 
\begin{equation}\label{eq:defgenoperator}
L\, =\, M_A\, T + M_B\, S + M_C
\end{equation}
where $M_f$ denotes the operator of multiplication
by a function $f$. We assume that $A$, $B$ and $C$ are
fixed polynomials with $\deg(A)=a$, $\deg(B)=b$ and
$\deg(C)=c$. In this case it follows that $L$ maps
a polynomial of degree $n$ in general to a 
polynomial of degree $\max(a+n-2, b+n-1,c+n)$. So 
if we look for a tridiagonalization in terms of
$y_n$ a polynomial of degree $n$ we require
$a\leq 3$, $b\leq 2$ and $c\leq 1$. 

The case $a\leq 2$, $b\leq 1$ has been studied extensively,
in particular the existence of polynomial eigenfunctions
for $M_A\, T + M_B\, S$ for $a\leq 2$, $b\leq 1$, see 
Bochner's Theorem \ref{thm:Bochner} for the classical
case of $T=S^2$, $S=\frac{d}{dx}$,
and for several other
instances of the operators $T$ and $S$, see 
\cite[Ch. 20]{Isma}. In most of these
cases $M_A\, T + M_B\, S$ have polynomial eigenfunctions
which are classes of orthogonal polynomials, so that 
$L \,= \, M_A\, T + M_B\, S + M_C$ is tridiagonal with respect to these polynomials
by the three-term recurrence relation in case $\deg(C)=1$.

So the previous discussion motivates why we
consider operators as in the following
definition.

\begin{defn}\label{def:TDoperator}
Let $S$ and $T$ be linear operators preserving
the space $\C[x]$ of polynomials, such that
$S$, respectively $T$, lowers the degree by $1$,
respectively $2$. We say that the linear operator
$L = M_A\, T + M_B\, S + M_C$ is a TD-operator if
$A$, $B$ and $C$ are polynomials with $\deg(A)=a\leq 3$, $\deg(B)=b\leq 2$ and
$\deg(C)=c\leq 1$ with $a=3$ or $b=2$. Here $M_f$
denotes multiplication by the function $f$. 
\end{defn}

\begin{thm}\label{thm:TDoperatorsaretridiagonalizable}
Let $L$ be a TD-operator, then there exist 
monic polynomials $\{ y_n\}_{n=0}^\infty$, $\text{\rm{deg}}(y_n)=n$,
such that \eqref{eq:gentridiagonalform} holds for
suitable coefficients $A_n$, $B_n$, $C_n$.
\end{thm}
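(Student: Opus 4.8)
The plan is to construct the monic polynomials $y_n$ degree by degree and verify that the action of $L$ can only produce the three neighbours $y_{n+1}, y_n, y_{n-1}$. First I would record the basic degree count: since $\deg A = a \le 3$, $\deg B = b\le 2$, $\deg C = c \le 1$, and since $T$ lowers degree by $2$ and $S$ lowers degree by $1$, the operator $L = M_A T + M_B S + M_C$ sends $\C[x]_{\le n}$ into $\C[x]_{\le n+1}$; this is the crucial bound that makes tridiagonalization possible, and it uses the hypothesis $a=3$ or $b=2$ only to guarantee that the ``$+1$'' is genuinely attained (so that $A_n\neq 0$ and the recursion does not degenerate). Because $L$ raises degree by at most one, writing $L y_n$ in the monomial basis and then re-expanding in the basis $\{y_0,\dots,y_{n+1}\}$ automatically gives $L y_n = \sum_{k=0}^{n+1} \lambda_{n,k} y_k$ for some constants $\lambda_{n,k}$; the whole content of the theorem is that $\lambda_{n,k}=0$ for $k \le n-2$.

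To get that vanishing I would exhibit the $y_n$ as the polynomials orthogonal (in a formal, possibly non-positive sense) to a bilinear form for which $L$ is self-adjoint, or equivalently characterize them by the condition that $L y_n$ has no component below $y_{n-1}$ and use this to define them inductively. Concretely: suppose $y_0,\dots,y_{n-1}$ have already been constructed as monic polynomials with $\deg y_j = j$ such that $L$ acts tridiagonally on $y_0,\dots,y_{n-2}$ and such that $L y_{n-1} = A_{n-1} \tilde{y}_{n} + (\text{lower})$ where $\tilde y_n$ is some fixed monic degree-$n$ polynomial. I then set $y_n := \tilde y_n$, i.e. I \emph{define} $y_n$ to be the monic degree-$n$ polynomial appearing as the top term when $L$ is applied to $y_{n-1}$; this is well defined because the leading coefficient of $L y_{n-1}$ in degree $n$ is $d'_n$ times the leading coefficient of $A$, or the analogous $S$-term, which is nonzero precisely by the condition $a=3$ or $b=2$. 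With this definition the coefficient $C_n$ (the coefficient of $y_{n-1}$ in $L y_n$) and the coefficient $A_{n-1}$ (the coefficient of $y_n$ in $L y_{n-1}$) are forced, and it remains to check that $L y_n$ has no component along $y_k$ for $k\le n-2$.

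That last point is the one place where something must be proved rather than bookkept, and it is the main obstacle. The natural way to handle it is to introduce the formal linear functional $\mathcal{L}$ on $\C[x]$ defined by $\mathcal{L}[p] = $ (the $y_0$-coefficient of $p$ when expanded in the $\{y_j\}$ basis constructed so far), and to show that $L$ is formally symmetric with respect to the pairing $\langle p, q\rangle := \mathcal{L}[pq]$, so that the component of $L y_n$ along $y_k$ equals, up to a nonzero normalization, the component of $L y_k$ along $y_n$; since $\deg(L y_k) \le k+1 \le n-1 < n$, that component is zero. Establishing the required symmetry $\langle L p, q\rangle = \langle p, L q\rangle$ is the technical heart: it amounts to an integration-by-parts / summation-by-parts identity for the abstract operators $S$ and $T$ against the weight implicitly defined by $\mathcal{L}$, and one has to verify that the degree constraints $a\le 3$, $b\le 2$, $c\le 1$ are exactly what is needed for the relevant boundary-type terms to cancel and for the adjoint of $M_A T + M_B S + M_C$ to again have the same shape. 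Once this symmetry is in place, the tridiagonality \eqref{eq:gentridiagonalform} follows immediately, and one reads off that $A_n, B_n, C_n$ are the claimed coefficients; I would also note in passing that the construction shows $y_n$ is unique once one fixes it to be monic, and that $A_n C_{n+1}$ is, up to sign, $\langle y_{n+1},y_{n+1}\rangle / \langle y_n,y_n\rangle$, which ties back to the orthogonal-polynomial discussion preceding the definition.
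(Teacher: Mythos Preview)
Your approach has a genuine gap at precisely the point you flag as the ``technical heart.'' You want to show that $L y_n$ has no component along $y_k$ for $k\le n-2$ by producing a bilinear form $\langle p,q\rangle=\mathcal L[pq]$ for which $L$ is symmetric, and then invoking $\langle L y_n,y_k\rangle=\langle y_n,L y_k\rangle=0$. But a general TD-operator is \emph{not} symmetric with respect to any natural pairing: the paper treats symmetry as an additional hypothesis in a separate subsection (Proposition~\ref{prop:tridiagonalizationorthonormal}), and even in the concrete case $S=\frac{d}{dx}$, $T=S^2$ one needs the Pearson-type constraint $(Aw)'=Bw$ on a weight $w$, which has nothing to do with the degree bounds $a\le 3$, $b\le 2$, $c\le 1$. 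Your proposed functional $\mathcal L$ is defined \emph{through} the $y_j$ you are trying to construct, so the argument is circular; and the vague ``integration-by-parts identity for the abstract operators $S$ and $T$'' cannot be made to work, since Definition~\ref{def:TDoperator} imposes no relation between $S$, $T$, and multiplication beyond degree-lowering. Your claimed uniqueness of the monic $y_n$ is also wrong; see Remark~\ref{rmk:thmTDoperatorsaretridiagonalizable}(ii).

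The paper's proof sidesteps all of this by \emph{not} trying to show that low-degree components of $L y_k$ vanish. Instead it uses the freedom in $y_{k+1}$ to absorb them: given $y_0,\dots,y_k$, the relation $L y_k = A_k y_{k+1}+B_k y_k+C_k y_{k-1}$ is a system of $k+2$ scalar equations (one per degree $0,\dots,k+1$) in the $k+1$ free coefficients of the monic $y_{k+1}$ together with $A_k,B_k,C_k$; that is $k+4$ unknowns, so the system is underdetermined and one simply solves it degree by degree. The key insight you are missing is that tridiagonality here is achieved by \emph{choice} of the next polynomial, not by an orthogonality mechanism forcing cancellations.
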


\begin{proof} First note that there is no loss
by assuming the polynomials $y_n$ to be monic.

Recall we assume 
$S\, x^k = d_k x^{k-1}$, $k\in\N$, with $d_k\not= 0$ for
$k\geq 1$ and $d_0=0$, and $T\, x^k = d'_k x^{k-2}$, $k\in\N$, 
with $d'_k\not= 0$ for
$k\geq 2$ and $d'_0=d'_1=0$. Put
\begin{equation*}
\begin{split}
A(x) \, &= \, \al_3 \, x^3 +  \al_2 \, x^2 + \al_1 \, x + \al_0, \\ 
B(x) \, &= \, \be_2 \, x^2 + \be_1 \, x + \be_0, \\ 
C(x) \, &= \,  \ga_1 \, x + \ga_0. 
\end{split}
\end{equation*}
This implies
\begin{equation}\label{eq:thmTDoperatorsaretridiagonalizable1}
L\, x^k = (\al_3 d'_k + \be_2 d_k +\ga_1)\, x^{k+1} 
+ (\al_2 d'_k + \be_1 d_k +\ga_0)\, x^k + 
(\al_1 d'_k + \be_0 d_k)\, x^{k-1} 
+ \al_0 d'_k\, x^{k-2}. 
\end{equation}
In particular, the result follows with $y_n(x)=x^n$ in case $\al_0=0$.

Now take $y_0(x)=1$, so that $L\, y_0(x)= C(x)$. Putting
$y_1(x) = x+ c_0(1)$ we find that $L\, y_0 = A_0\, y_1 + B_0\, y_0$
if we take $A_0=\ga_1$, $\ga_1 c_0(1) + B_0=\ga_0$. Note that
there is a choice for the constant term $c_0(1)$ in $y_1$. 
Proceeding inductively, we assume that 
we have determined $\{y_0,\ldots, y_k\}$ such that
\begin{equation}\label{eq:thmTDoperatorsaretridiagonalizable2}
L\, y_n \, =\,  A_n\, y_{n+1} + B_n\, y_n + C_n\, y_{n-1}, 
\qquad 0 \leq n\leq k-1,
\end{equation}
Since $y_k$ and $y_{k+1}$ are monic polynomials, we see that 
\eqref{eq:thmTDoperatorsaretridiagonalizable2} to hold for
$n=k$ forces $A_k= \al_3 d'_k + \be_2 d_k +\ga_1$ by
\eqref{eq:thmTDoperatorsaretridiagonalizable1}. 
Putting $y_{k+1}(x) = x^{k+1}+\sum_{p=0}^k c_p x^p$, we see
that we need to determine $c_p$, $B_k$ and $C_k$ from
\begin{equation}\label{eq:thmTDoperatorsaretridiagonalizable3}
A_k c_p = \text{coeff}_p (Ly_k) - B_k \, \text{coeff}_p(y_k) 
- C_k\, \text{coeff}_p (y_{k-1}), \qquad 0\leq p \leq k,
\end{equation}
where $\text{coeff}_p (r)$ is the coefficient of $x^p$ in 
a polynomial $r$. Starting with $p=k$, we see that we need
to choose $c_k$, $B_k$ satisfying ---recall $y_k$ monic---
$A_k c_k = \text{coeff}_k (Ly_k) - B_k$, which can be easily
done for all values of $A_k$. So we fix $c_k$ and $B_k$. Next
for $p=k-1$ we get 
$A_k c_{k-1} = \text{coeff}_{k-1} (Ly_k) - B_k \, \text{coeff}_{k-1}(y_k) 
- C_k$, for which we choose a solution for $c_{k-1}$ and $C_k$. 
Now we have fixed $B_k$ and $C_k$, so we can solve $c_p$, $0\leq p \leq k-2$
uniquely (in case $A_k\not=0$) from 
\eqref{eq:thmTDoperatorsaretridiagonalizable3}, and we can assign some
value to $c_p$ in case $A_k=0$. 
\end{proof} 

\begin{remark}\label{rmk:thmTDoperatorsaretridiagonalizable}
(i) Note that in case e.g. $S=\frac{d}{dx}$ and $T=S^2$ one could
stop after the remark following \eqref{eq:thmTDoperatorsaretridiagonalizable1},
since we can use an affine transformation to reduce to the case
$A(0)=0$. However, in general we do not assume simple transformation
properties for $S$ and $T$. 

(ii) Note that there is freedom in the choice for $y_{n+1}$ in
the proof of Theorem \ref{thm:TDoperatorsaretridiagonalizable}. 
More requirements on the functions $y_n$ should indicate which
set to favour. 
\end{remark}

\subsection{Symmetric TD-operators}\label{ssec:symmetricTDoperators}

Now we assume that we have an Hilbert space $\cH$ of functions
containing the polynomials $\C[x]\hookrightarrow \cH$ injectively. 
We do not assume that 
$L$ can be extended as a bounded operator to $\cH$, but we 
assume that $L$ can be viewed as a densely defined operator
on $\cH$ such that $\C[x] \subset D(L)$, the domain of $L$.
Note that we assume $L\colon \C[x]\to \C[x]$, and we 
assume that $\C[x]$ dense in $\cH$ by switching to the
closure of $\C[x]$ in $\cH$ if necessary. 

\begin{prop}\label{prop:tridiagonalizationorthonormal} 
Let $L$ be a TD-operator. 
Assume $L$ with domain $D(L)$ is symmetric as
unbounded operator on $\cH$, then we can assume
$\langle y_n, y_m\rangle = 0$ for $n\not=m$. 
\end{prop}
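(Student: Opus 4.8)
The plan is to exploit the freedom in the choice of the polynomials $y_n$ noted in Remark~\ref{rmk:thmTDoperatorsaretridiagonalizable}(ii), using the Gram--Schmidt-type construction that is standard for orthogonal polynomials. Concretely, I would run the inductive construction of Theorem~\ref{thm:TDoperatorsaretridiagonalizable} again, but at each step pin down the remaining free constants by demanding orthogonality to the previously constructed $y_0,\ldots,y_{n-1}$ with respect to $\langle\cdot,\cdot\rangle$. Since the $y_n$ are monic of degree $n$ and $\C[x]\hookrightarrow\cH$ injectively, the vectors $y_0,\ldots,y_{n-1}$ span the same subspace of $\cH$ as $1,x,\ldots,x^{n-1}$; so given $y_n^{\mathrm{raw}}$ produced by the theorem, one may try to replace it by $y_n = y_n^{\mathrm{raw}} - \sum_{k=0}^{n-1} \frac{\langle y_n^{\mathrm{raw}}, y_k\rangle}{\langle y_k,y_k\rangle}\, y_k$. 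The point is that subtracting a polynomial of degree $< n$ from $y_n^{\mathrm{raw}}$ changes none of the degrees and—crucially—this is exactly the kind of modification the construction of Theorem~\ref{thm:TDoperatorsaretridiagonalizable} allows at each stage, so the tridiagonal relation \eqref{eq:gentridiagonalform} can be maintained with (possibly) adjusted coefficients $A_n,B_n,C_n$.

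The key structural input is symmetry of $L$: this is what forces the tridiagonal coefficients to be compatible with orthogonality. Suppose inductively that $y_0,\ldots,y_n$ are mutually orthogonal and satisfy \eqref{eq:gentridiagonalform} for indices $\le n-1$. For $m \le n-2$ we have, using symmetry of $L$ on $\C[x]\subset D(L)$,
\[
\langle L y_n, y_m\rangle = \langle y_n, L y_m\rangle = \langle y_n,\, A_m y_{m+1} + B_m y_m + C_m y_{m-1}\rangle = 0,
\]
because $m+1 \le n-1$. Hence $L y_n$ is automatically orthogonal to $y_0,\ldots,y_{n-2}$, so when we expand $L y_n$ against the (degree-graded) orthogonal system it can only have components along $y_{n+1}, y_n, y_{n-1}$; that is, \eqref{eq:gentridiagonalform} holds for index $n$ with coefficients obtained by taking inner products, $A_n = \langle L y_n, y_{n+1}\rangle/\langle y_{n+1},y_{n+1}\rangle$ and similarly for $B_n, C_n$. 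Thus orthogonality of the $y_n$ and the tridiagonal form are mutually consistent, and the induction closes.

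The main obstacle—really the only delicate point—is that the construction must produce the $y_n$ \emph{and} verify the tridiagonalization \emph{simultaneously}, rather than first building an arbitrary tridiagonalizing family and then orthogonalizing it, since Gram--Schmidt applied after the fact need not preserve \eqref{eq:gentridiagonalform}. So the clean way to organize the proof is: build $\{y_n\}$ by Gram--Schmidt orthogonalization of $\{1,x,x^2,\ldots\}$ in $\cH$ (which is legitimate because the polynomials sit injectively in $\cH$, so the Gram determinants are nonzero and each $y_n$ is a well-defined monic polynomial of degree $n$), and then \emph{prove} that this particular family tridiagonalizes $L$ using the symmetry computation above together with the degree count for a TD-operator—namely that $L$ raises degree by at most one, so $L y_n$ lies in the span of $y_0,\ldots,y_{n+1}$, and symmetry kills all components below $y_{n-1}$. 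One should also remark that if $\langle\cdot,\cdot\rangle$ is genuinely positive definite the Gram--Schmidt process never breaks down; the paper's standing setup (polynomials embedded injectively in a Hilbert space) guarantees exactly this.
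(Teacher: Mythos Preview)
Your final argument is correct and is essentially the paper's own proof: orthogonalize the polynomials by Gram--Schmidt, use the TD-operator degree bound to see that $Ly_n$ lies in the span of $y_0,\ldots,y_{n+1}$, and then invoke symmetry via $\langle L y_n, y_m\rangle = \langle y_n, L y_m\rangle$ to kill the components with $m<n-1$. Your intermediate worry that ``Gram--Schmidt applied after the fact need not preserve \eqref{eq:gentridiagonalform}'' is in fact unfounded---the paper's proof does exactly that (orthogonalize an arbitrary tridiagonalizing family and then verify tridiagonality on the new basis via the same symmetry computation)---so your detour to rebuild the family from the monomials is unnecessary, though harmless since Gram--Schmidt on any monic degree-filtered basis yields the same orthogonal polynomials.
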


\begin{proof} Since $\{ y_n\}_{n=0}^\infty$ is a family of
polynomials in $\cH$ we can apply the Gram-Schmidt procedure to
$\{y_0, y_1,\ldots\}$, and denote the resulting
orthogonal set of monic polynomials by $r_n$, then we
have $\deg(r_n)=\deg(y_n)=n$ and 
$r_n = y_n + \sum_{k<n} c_k y_k$. By 
\eqref{eq:gentridiagonalform} we find
$L\, r_n = A_n\, r_{n+1} + \sum_{k\leq n} c'_k r_k$.
Then $\langle L\, r_n, r_m\rangle = 0$ for $m>n+1$, 
and for $m<n-1$ we have
\begin{equation*}
\langle L\, r_n, r_m\rangle = 
\langle r_n, L^\ast r_m\rangle = 
\langle r_n, L\, r_m\rangle = 
\langle r_n, A_m\, r_{m+1} + \sum_{k\leq m} c'_k r_k \rangle = 0
\end{equation*}
since $m+1<n$. Note that $r_m \in \C[x] \subset D(L)\subset D(L^\ast)$. 
So $L$ is tridiagonal with respect to the orthogonal 
set $\{ r_n\}_{n=0}^\infty$. 
\end{proof}

Note that Proposition \ref{prop:tridiagonalizationorthonormal} easily
extends to $L$ skew-symmetric.

\begin{remark}\label{rmk:linktoeigenvalue} 
Assume that in Proposition \ref{prop:tridiagonalizationorthonormal}  
the orthogonal polynomials $y_n$ are 
eigenfunctions of a symmetric operator $D$, $D\, y_n=\la_n\, y_n$, such that 
$D$ preserves the polynomials, $D\colon \C[x]\to\C[x]$, and
the degree, $\deg(Dx^k) = k$, see e.g. Bochner's Theorem 
\ref{thm:Bochner} for the classical orthogonal polynomials and, 
more generally, for all polynomials in the Askey-scheme and its
$q$-analogue, see \cite{KoekS}. So in particular, we assume $\la_n\not=0$, $n\geq 1$. 
We assume that $D$ acts as a possibly unbounded 
linear operator on $\cH$. 
Let $X$ be the operator of 
multiplication by the independent variable, so that by orthogonality
\[
X\, y_n = a_n\, y_{n+1} + b_n\, y_n + c_n \, y_{n-1}.
\]
We also assume that $X\colon \C[x]\to \C[x]$ acts as a possibly unbounded self-adjoint
operator on $\cH$. 
Then the anticommutator $DX+XD$ is symmetric, and by
\begin{equation*}
(DX+XD)\, y_n = a_n(\la_{n+1}+\la_n)\, y_{n+1} + 
2\la_n b_n\, y_n + c_n (\la_n+\la_{n-1})\, y_{n-1}
\end{equation*}
it follows that $DX+XD$ is a symmetric TD-operator. 

Conversely, if $L$ is as in Proposition \ref{prop:tridiagonalizationorthonormal},
then we can define $D$ as a linear operator on $\C[x]$ by
\begin{equation}\label{eq:conversedefDfromL}
D\, x^n = \sum_{k=0}^{n-1} (-1)^k X^k\, L x^{n-1-k},\ n\geq 1, \qquad \quad
D\, 1 = 0, 
\end{equation}
by iterating $D\,x^n = DX \, x^{n-1} = (L-XD) x^{n-1}$ and using the
initial condition $D\, 1=0$.  Note that this completely determines
$D$ on the polynomials $\C[x]$. 
From \eqref{eq:conversedefDfromL} we
can show that $DX+XD=L$ on $\C[x]$. Since we assume $L$ and $X$ 
symmetric, we get $D^\ast X+XD^\ast =L$ on $\C[x]$ assuming 
$D^\ast$ preserves the polynomials. If one also assumes that
$\deg D^\ast x^k \leq k$, we see that $D^\ast$ must have the
same form as $D$.  So $D$ is symmetric if we can show that
$D^\ast 1= 0$. By the assumptions we have $D^\ast 1 = c$
for some constant $c$, and
\[
c = \frac{\langle D^\ast 1, 1\rangle}{ \| 1\|^2} = 
\frac{\langle 1, D\, 1\rangle}{ \| 1\|^2} = 0.
\]
Since $D$ is symmetric, preserving polynomials and the degree,
we find $D\, y_n = \la_n\, y_n$ for real $\la_n$ with
$\la_0=0$.

In case $L$ is antisymmetric, this has been completely worked out
by Koornwinder \cite[\S 2]{Koor-JCAM}, and then one has interesting
links to the so-called string equation. 
\end{remark}

In the situation of Proposition \ref{prop:tridiagonalizationorthonormal}
we can next orthonormalize the orthogonal polynomials $\{y_n\}_{n=0}^\infty$ in
$\cH$, and then we get
\begin{equation}\label{eq:symmetrictridiagonalform}
L\, y_n \, =\,  A_n\, y_{n+1} + B_n\, y_n + A_{n-1}\, y_{n-1}
\end{equation}
with $A_n, B_n\in\R$ and with the convention $A_{-1}=0$. 
Note that in the skew-symmetric case we obtain 
the same result but 
with $A_n, B_n\in i\R$ and with the convention $A_{-1}=0$.

The situation in \eqref{eq:symmetrictridiagonalform} is 
governed by the occurrences of $A_n=0$. In case 
$A_{n_1}=0$ and $A_{n_2}=0$ with $n_1<n_2$, and, in
view of the convention, $n_1=-1$
is allowed, we see that 
$L$ preserves the finite-dimensional subspace
spanned by $y_n$ for $n_1 < n \leq n_2$, which has dimension
$n_2-n_1$. In particular, if $n_2=n_1+1$ we see that 
$y_{n_2}$ is an eigenfunction of $L$ for the eigenvalue 
$B_{n_2}$. We have to distinguish between the 
cases of finite or infinite zeros of $n\mapsto A_n$.

\begin{thm}\label{thm:spectrumofLinfinitezerosofAn}
Let $(L, D(L))$, with $D(L)=\C[x]\hookrightarrow \cH$, be a symmetric 
densely defined TD-operator with the tridiagonalization 
\eqref{eq:symmetrictridiagonalform}. Assume
$-1=n_0 <n_1 <n_2 <\cdots$  is such that $A_{n_i}=0$, 

\noindent
\textrm{\rm (i)} In case $\N\ni n\mapsto A_n$ has an infinite
number of zeros, the finite-dimensional subspaces
$\cH_i = \text{\rm span}\{ y_n \mid n_{i-1} < n \leq n_i\}$,
$i\geq 1$, $\dim \cH_i = n_i-n_{i-1}$, are invariant for $L$. 
Moreover, $\cH = \bigoplus_{i=1}^\infty \cH_i$ and 
$L \vert_{\cH_i}$ has simple spectrum consisting of
$\dim \cH_i$ different eigenvalues. The operator
$(L, D(L))$ is essentially self-adjoint. 

\noindent
\textrm{\rm (ii)} In case $\N\ni n\mapsto A_n$ has $k$ zeros,
$n_0=-1 < n_1 < \cdots  < n_k$, the $k$ finite dimensional 
subspaces 
$\cH_i = \text{\rm span}\{ y_n \mid n_{i-1} < n \leq n_i\}$,
$1\leq i \leq k$, $\dim \cH_i = n_i-n_{i-1}$, are invariant for $L$. 
$L \vert_{\cH_i}$ has simple spectrum consisting of
$\dim \cH_i$ different eigenvalues. Consider the sequence
of polynomials determined by $p_0(z)=1$, and 
\[
z\, p_n(z) = A_{n+n_k+1}\, p_{n+1}(z) + B_{n+n_k+1}\, p_n(z) 
+ A_{n+n_k} p_{n-1}(z) 
\]
then $(L,D(L))$ is essentially self-adjoint if and only if
the orthogonal polynomials $\{ p_n\}_{n=0}^\infty$ correspond
to a determinate moment problem. 
\end{thm}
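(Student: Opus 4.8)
The plan is to exhibit the block decomposition of $\cH$ induced by the zeros of $n\mapsto A_n$, settle the spectrum of the finite blocks by a standard Jacobi-matrix argument, and obtain essential self-adjointness by computing deficiency indices additively over the blocks.

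First I would verify the invariance. By \eqref{eq:symmetrictridiagonalform} together with $A_{n_{i-1}}=A_{n_i}=0$, the vector $L\,y_n$ for $n_{i-1}<n<n_i$ is a combination of $y_{n-1},y_n,y_{n+1}$, all in $\cH_i$, while at the two endpoints the only term that would leave $\cH_i$ carries the coefficient $A_{n_{i-1}}$ or $A_{n_i}$ and hence vanishes; so each $\cH_i$ is invariant. In case (ii) the same reasoning shows that the tail $\cH_\infty:=\overline{\text{\rm span}}\{y_n\mid n>n_k\}$ is invariant, the only escaping term being $A_{n_k}\,y_{n_k}$ in $L\,y_{n_k+1}$, which vanishes. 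Since the $y_n$ are orthonormal and $\text{\rm span}\{y_n\}_{n=0}^\infty=\C[x]$ is dense in $\cH$, we get the orthogonal Hilbert-space decompositions $\cH=\bigoplus_{i\geq1}\cH_i$ in case (i) and $\cH=\cH_1\oplus\cdots\oplus\cH_k\oplus\cH_\infty$ in case (ii), with $L$ respecting the corresponding algebraic decomposition of $\C[x]$.

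Next I would treat a single finite block. In the basis $\{y_n\mid n_{i-1}<n\leq n_i\}$ the operator $L|_{\cH_i}$ is a real symmetric tridiagonal (Jacobi) matrix whose off-diagonal entries are the $A_n$ with $n_{i-1}<n<n_i$, all nonzero by the choice of the $n_j$. By a standard fact from the theory of orthogonal polynomials --- the characteristic polynomial of such a truncated Jacobi matrix equals, up to sign, the monic polynomial of degree $\dim\cH_i$ generated by the truncated three-term recurrence, and such a polynomial has only real simple zeros (see e.g. \cite{Chih}, \cite{Isma}) --- $L|_{\cH_i}$ has $\dim\cH_i$ distinct real eigenvalues, i.e. simple spectrum. (Note this does not force $L$ itself to have simple spectrum, since eigenvalues of distinct blocks may coincide.)

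For essential self-adjointness I would argue via deficiency indices, which are additive over an orthogonal direct sum of reducing subspaces for a symmetric operator. In case (i), $\C[x]$ is the algebraic direct sum of the $\cH_i$ and $L$ is the algebraic direct sum of the finite-dimensional, automatically self-adjoint operators $L|_{\cH_i}$; each has deficiency indices $(0,0)$, hence so does $L$, so $(L,D(L))$ is essentially self-adjoint, its closure being the self-adjoint operator $\{(\xi_i)\mid\sum_i\|L|_{\cH_i}\xi_i\|^2<\infty\}$. In case (ii) the finite blocks again contribute $(0,0)$, so $n_\pm(L)=n_\pm(L|_{\cH_\infty})$. Under the unitary map $y_{n_k+1+m}\mapsto e_m$ from $\cH_\infty$ onto $\ell^2(\N)$, the pair $(L|_{\cH_\infty},\ \text{\rm span}\{y_n\mid n>n_k\})$ is carried onto the Jacobi operator on $\ell^2(\N)$ with domain the finitely supported sequences and recurrence coefficients $A_{n_k+1+m},B_{n_k+1+m}$, i.e. exactly the Jacobi operator whose orthogonal polynomials are the $\{p_n\}_{n=0}^\infty$ of the statement. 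By the classical criterion for Jacobi matrices (see \cite{Isma}, \cite{Tesc}) this operator is essentially self-adjoint if and only if the associated moment problem is determinate; since the coefficients are real, $n_+(L|_{\cH_\infty})=n_-(L|_{\cH_\infty})\in\{0,1\}$, the value $0$ occurring precisely in the determinate case. Hence $(L,D(L))$ is essentially self-adjoint if and only if the moment problem for $\{p_n\}$ is determinate.

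The step I expect to demand the most care is the reduction of case (ii) to the standard Jacobi dichotomy: one must confirm that the unitary identification of $\cH_\infty$ with $\ell^2(\N)$ sends $\text{\rm span}\{y_n\mid n>n_k\}$ onto the finitely supported sequences (so that one compares the \emph{minimal} operators on both sides), check that the leftover coupling term $A_{n_k}y_{n_k}$ genuinely drops out so that $\cH_\infty$ is exactly invariant, and cite the precise form of the classical theorem linking essential self-adjointness of the minimal Jacobi operator to determinacy of the moment problem. The additivity of deficiency indices over the (in case (i), infinite) orthogonal direct sum of reducing subspaces should also be recorded explicitly.
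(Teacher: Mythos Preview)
Your proposal is correct and follows essentially the same route as the paper: block invariance from the zeros of $A_n$, the standard fact that a finite symmetric tridiagonal matrix with nonzero subdiagonal has simple spectrum, and the unitary identification of the infinite tail with the Jacobi operator on $\ell^2(\N)$, invoking the classical determinacy criterion. Your treatment is in fact more explicit than the paper's on two points---you spell out essential self-adjointness via additivity of deficiency indices over reducing subspaces (the paper simply asserts that in case~(i) the closure is the maximal extension), and you flag the correct indexing of the tail as $\{y_n\mid n>n_k\}$ (the paper's write-up has a minor off-by-one in its span $\{y_{n+n_k}\}_{n=0}^\infty$)---but the underlying argument is the same.
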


\begin{proof}  In case $A_{n_1}=0$ and $A_{n_2}=0$ with $n_1<n_2$ we see that 
$L$ preserves the finite-dimensional subspace $\cK$, $\dim \cK= n_2-n_1$,
spanned by $y_n$ for $n_1 < n \leq n_2$. By \eqref{eq:symmetrictridiagonalform}
it follows that $L\colon \cK \to \cK$ is given by a Jacobi matrix, i.e.
a symmetric tridiagonal matrix. It is well-known, see e.g. 
\cite{Ston}, \cite{Szeg}, that such a matrix has
$\dim \cK$ different eigenvalues, and that each of them has multiplicity one.
In case (i) we have that the closure of $L$ is given by 
its maximal extension, which is self-adjoint.

In case (ii) the previous considerations remain valid for the finite
dimensional invariant subspaces, and we are left with the study of
the action of $L$ on the closure $\cK$ of the linear span $\{ y_{n+n_k}\}_{n=0}^\infty$.
Let $\ell^2(\N)$ be the Hilbert space of square summable sequences 
with standard orthonormal basis $\{e_n\}_{n=0}^\infty$. Then 
$U\colon \cK\to \ell^2(\N)$, $y_{n+n_k} \mapsto e_n$ is a unitary map such that
\[
U \, L\, U^\ast \, e_n = A_{n+n_k+1}\, e_{n+1} + B_{n+n_k+1}\, e_n 
+ A_{n+n_k} e_{n-1}.
\]
So the action of $L$ restricted to $\cK$ is intertwined with the 
action of a Jacobi operator on $\ell^2(\N)$, and it is well-known,
see e.g.  \cite{Koel}, \cite{Ston}, \cite{Tesc}, that
this Jacobi operator is essentially self-adjoint if and only if
the corresponding moment problem is determinate.
\end{proof}

The spectrum of a TD-operator on finite-dimensional invariant
subspaces can be determined explicitly, and in case 
we can also find the eigenfunctions in another (direct) way
this leads to non-trivial sums, see e.g. \S \ref{ssec:SchrodingerMorsepotential}
for an example. 
Let us now assume that the TD-operator $L$ with domain $D(L)$
acting on $\cH$ with $D(L)=\C[x]\hookrightarrow\cH$ dense in $\cH=L^2(\nu)$ 
is essentially self-adjoint, and we assume that $A_n$ has no
zeros, except the convention $A_{-1}=0$. So we are in the
second case of Theorem \ref{thm:spectrumofLinfinitezerosofAn}.  
In this case the spectrum is
simple \cite[Ch.~VI]{Ston}, so the spectral theorem states that 
there exists a unitary map $\Upsilon \colon \cH=L^2(\nu) \to 
\cK = L^2(\mu)$, to some weighted $L^2$-space with $\mu$ a positive
Borel measure on $\R$ such that
$\Upsilon\, L\, \Upsilon^\ast = X$, where $X$ is the (possibly)
unbounded operator on $L^2(\mu)$ of multiplication by the
independent variable, say $\la$, see  \cite[Ch.~VI]{Ston}. We assume that there exist 
suitable functions $\phi_\la$, generally not assumed to be
in the Hilbert space $\cH$, such that 
$\bigl(\Upsilon f\bigr) (\la) = \langle f, \phi_\la\rangle$ for
suitable $f\in\cH$ and where $L\, \phi_\la = \la\, \phi_\la$. 
This is a typical situation in the spectral decomposition of
various second order differential or difference operators. 

In this case $L$ has simple spectrum, and since $\Upsilon y_n$
satisfies the same recurrence relation we find
\begin{equation}\label{eq:integraltransform}
\bigl( \Upsilon y_n\bigr)(\la) = 
\int \phi_\la(x)\, y_n(x)\, d\nu(x) \, = \, p_n(\la)\, (\Upsilon 1)(\la),
\end{equation}
or, the integral transform with kernel the (formal) eigenfunctions of 
$L$ maps the orthogonal polynomials $y_n$ to the orthogonal polynomials
$p_n$, up to a common multiple.  See e.g. \cite{Groe}, \cite{Koor-LNM}
for examples. 


\section{Second order differential operators}\label{sec:secondorderdiffoper}

We now restrict ourselves to the case of second order differential operators
as an example. Needless to say that appropriate
$q$-analogues or difference analogues can be considered as well
within  this general framework. First we discuss some
generalities, and then we discuss two examples; the 
Schr\"odinger equation with the Morse potential 
in Section \ref{ssec:SchrodingerMorsepotential}, and
the Lam\'e equation in Section \ref{ssec:Lame}. 

\subsection{Theorems by Bochner and Al-Salam--Chihara}\label{ssec:thmBochnerAlSalamChihara}
We now assume that 
\begin{equation}\label{eq:Las2ndorderODE}
L = M_A \, \frac{d^2}{dx^2} + M_B\, \frac{d}{dx} 
+ M_C, 
\end{equation}
so we take $S=\frac{d}{dx}$, and $T=S^2= \frac{d^2}{dx^2}$.
This then fits into the scheme of 
Theorem \ref{thm:TDoperatorsaretridiagonalizable}. Recall
our basic assumption that $\deg(A)=a\leq 3$, 
$\deg(B)=b\leq 2$, and $\deg(C)=c\leq 1$, and that we assume that
$a = 3$ or  $b = 2$. Indeed, in case $a\leq 2 $ and $b \leq 1$ we
are essentially back to Bochner's Theorem \ref{thm:Bochner}, and the
fact that all polynomials in Bochner's Theorem satisfy a three-term
recurrence. For completeness, we recall Bochner's Theorem here,
see Bochner \cite{Boch}, or e.g. \cite[\S 20.1]{Isma}.

\begin{thm}[Bochner (1929)]\label{thm:Bochner}
Up to affine scaling the only sets $\{ y_n\}_{n=0}^\infty$ of 
polynomials that are eigenfunctions to a second order differential operator
$A(x)\, y_n''(x) \, + \, B(x)\, y_n'(x)  \, +\, \la_n\, y_n(x)=0$ for all $n\geq 0$ are
\begin{enumerate}
\item Jacobi polynomials: $\deg(A)=2$ with different zeroes, $\deg(B)=0$ or $1$;
\item Laguerre polynomials: $\deg(A)=1$, $\deg(B)=1$; 
\item Hermite polynomials: $\deg(A)=0$, $\deg(B)=1$;
\item Bessel polynomials: $\deg(A)=2$ with double zero, $\deg(B)=0$ or $1$ and
$A$ and $B$ have no common zero;
\item Monomials: $\deg(A)=2$ with double zero, $\deg(B)=1$ and
$A$ and $B$ have a common zero.
\end{enumerate}
\end{thm}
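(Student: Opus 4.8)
The plan is to start from the eigenvalue equation $A(x)\,y_n''(x)+B(x)\,y_n'(x)+\la_n\,y_n(x)=0$ and first extract constraints on $\deg(A)$ and $\deg(B)$ by a degree count. Writing $y_n(x)=x^n+(\text{lower order})$ and comparing the coefficients of $x^n$ on both sides, the operator $A\frac{d^2}{dx^2}+B\frac{d}{dx}$ must send a degree-$n$ polynomial to a polynomial of degree $\le n$ for every $n$; applied to $x^n$ this forces $\deg(A)\le 2$ and $\deg(B)\le 1$, and also pins down $\la_n$ in terms of the leading coefficients of $A$ and $B$: if $A(x)=\al_2x^2+\al_1x+\al_0$ and $B(x)=\be_1x+\be_0$, then $\la_n=-\bigl(n(n-1)\al_2+n\be_1\bigr)$. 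For the family to consist of genuinely distinct polynomials (one in each degree) one needs the $\la_n$ to be distinct, i.e. $\la_n\ne\la_m$ for $n\ne m$, which rules out the degenerate coincidences and is automatically satisfied in all the surviving cases.

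Next I would run the classification by cases on $\deg(A)\in\{0,1,2\}$, using affine changes of variable $x\mapsto ax+b$ to normalize $A$. If $\deg(A)=0$, rescale so $A\equiv 1$; then $\be_1\ne 0$ is needed (else $\la_n\equiv 0$), and translating in $x$ kills $\be_0$, giving (after scaling) $y_n''-2xy_n'+2ny_n=0$, the Hermite case. If $\deg(A)=1$, translate so $A(x)=x$ (the zero of $A$ at the origin); then $\be_1\ne 0$ is forced, and one is led to $xy_n''+(\be_0+\be_1 x)y_n'+\la_n y_n=0$, which after rescaling is the Laguerre equation, so $\deg(B)=1$. If $\deg(A)=2$, the discriminant of $A$ splits the analysis: distinct zeros, which after an affine map become $\pm 1$, yield $(1-x^2)y_n''+(\be_0-\be_1 x)y_n'+\la_n y_n=0$, the Jacobi case with $\deg(B)\le 1$; a double zero, normalized to $A(x)=x^2$, yields $x^2 y_n''+(\be_1 x+\be_0)y_n'+\la_n y_n=0$, and here one subdivides according to whether $\be_0=0$ (so $A$ and $B$ share the zero at $0$: this forces the monomials $y_n=x^n$) or $\be_0\ne 0$ (no common zero: this is the Bessel polynomial equation).

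The main obstacle I expect is not any single computation but the bookkeeping in the double-zero subcase of $\deg(A)=2$: one must check carefully that the recursion for the coefficients of $y_n$ really does produce an honest polynomial of degree exactly $n$ for each $n$ (the leading-coefficient relation $A_k=\al_3 d'_k+\be_2 d_k+\ga_1$-type condition must not vanish), and one must verify that the split into "common zero / no common zero" is exhaustive and that the $\be_0\ne 0$ branch indeed gives the Bessel polynomials rather than a spurious family. In the classical references this is where the argument is most delicate, since the Bessel polynomials are not orthogonal on the real line and the naive "orthogonal polynomials satisfy a three-term recurrence" heuristic does not directly apply; one instead checks the polynomial-solution condition degree by degree. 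A clean way to organize all cases uniformly is to observe that $\la_n-\la_{n-1}=-2(n-1)\al_2-\be_1$ must be nonzero for all $n\ge 1$, and to determine the constant term of each $y_{n+1}$ from the lower-order coefficient matching exactly as in the proof of Theorem~\ref{thm:TDoperatorsaretridiagonalizable}; the five listed families are precisely the normal forms that survive once $A$ is put into one of the shapes $1$, $x$, $1-x^2$, $x^2$ with $B$ of the compatible degree.
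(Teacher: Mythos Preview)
The paper does not prove Theorem~\ref{thm:Bochner}; it merely recalls the statement and refers the reader to Bochner's original paper and to \cite[\S 20.1]{Isma}. So there is no ``paper's own proof'' to compare your proposal against.

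That said, your outline is the standard argument and is essentially correct: the degree count forces $\deg(A)\le 2$, $\deg(B)\le 1$ and gives $\la_n=-\bigl(n(n-1)\al_2+n\be_1\bigr)$; the affine normalization of $A$ then splits the analysis into the five cases exactly as you describe. Two small cautions. First, the ``distinct $\la_n$'' requirement is not quite automatic in every surviving case: for $\al_2\ne 0$ one has $\la_n=\la_m$ precisely when $n+m=1-\be_1/\al_2$, so certain parameter values must be excluded (these are the familiar excluded parameter values for Jacobi and Bessel polynomials). Second, in the double-zero subcase $A(x)=x^2$ with $\be_0=0$ and $\be_1=0$ (so $B\equiv 0$), the monomials still solve the equation with $\la_n=-n(n-1)$, but $\la_0=\la_1=0$; you should say explicitly how you treat this degenerate boundary between the Bessel and monomial families, since the statement as written assigns $\deg(B)=1$ to the monomial case. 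Neither point is a real obstacle, but both deserve a sentence in a fully written-out proof.
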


For the notation of orthogonal polynomials we follow the notation as
in \cite{KoekS}, and for the Bessel polynomials we follow 
\cite{Isma}, so $P^{(\al,\be)}_n(x)$, $L^{(\al)}_n(x)$ and $H_n(x)$ denote
Jacobi, Laguerre and Hermite polynomials, whereas $y_n(x;a,b)$
are Bessel polynomials. 
In Bochner's Theorem
\ref{thm:Bochner} the first three sets are orthogonal polynomials 
on the real line and these  are classical orthogonal polynomials.
The Bessel polynomials are not orthogonal on the real line with respect to
a positive measure, see \cite[\S 4.10]{Isma}, and the same
is true for the monomials.

\begin{remark}  
Bochner's theorem has several analogues, e.g. by replacing
the differential operator $\frac{d}{dx}$ by one of the $q$-difference 
operators, see \cite[Ch.~20]{Isma} for more information. Bochner's work is
predated by Routh's paper \cite{Rout} in which Routh looks for 
polynomial solutions to a second order differential operator with 
polynomial coefficients of degree at most two and one which also
satisfy a three-term recurrence relation, see also \cite[Ch.~20]{Isma}.
\end{remark}

Next one can ask for a relation between the derivative of an
orthogonal polynomial, and its relation to orthogonal polynomials
of possibly different degree within the same family. 
The following classification theorem has been obtained by
Al-Salam and Chihara \cite{AlSaC}, see also the survey
by Al-Salam \cite{AlSaOP1990}.

\begin{thm}[Al-Salam and Chihara (1972)]\label{thm:AlSalamChihara}
If $\{ p_n\}_{n=0}^\infty$ is a family of orthogonal polynomials
on the real line 
with differential-recursion relation
\begin{equation*}
G(x)\, \frac{dp_n}{dx}(x) = {\mathcal A}_n\, p_{n+1}(x) + 
{\mathcal B}_n\, p_n(x)  + {\mathcal C}_n\, p_{n-1}(x)
\end{equation*}
for constants ${\mathcal A}_n$, ${\mathcal B}_n$, ${\mathcal C}_n$
and $G$ (necessarily) a polynomial of degree $\leq 2$, then 
the $p_n$'s are (up to affine scaling) Jacobi, Laguerre
or Hermite polynomials: 
\begin{equation*}
\begin{split}
(1-x^2) \, \frac{d}{dx}P^{(\al,\be)}_n(x) &\, = 
{\mathcal A}^{(\al,\be)}_n\, \, P^{(\al,\be)}_{n+1}(x) +
{\mathcal B}^{(\al,\be)}_n\, \, P^{(\al,\be)}_n(x) +
{\mathcal C}^{(\al,\be)}_n\, \, P^{(\al,\be)}_{n-1}(x); \\
x\frac{d}{dx} L^{(\al)}_n(x) &\, = \, n\, L^{(\al)}_n(x) \, 
- (n+\al)\, L^{(\al)}_{n-1}(x); \\ 
\frac{d}{dx} H_n(x) &\, = \, 2n\, H_{n-1}(x). 
\end{split}
\end{equation*}
\end{thm}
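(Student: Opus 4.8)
The plan is to convert the given differential-recursion relation, with the help of the three-term recurrence relation satisfied by the $p_n$, into a second order differential equation with polynomial coefficients obeyed by every $p_n$, and then to invoke Bochner's Theorem~\ref{thm:Bochner}. I would first write the recurrence relation as $x\,p_n(x)=\al_n\,p_{n+1}(x)+\be_n\,p_n(x)+\ga_n\,p_{n-1}(x)$ with $\al_n\neq0$ and $\al_{n-1}\ga_n>0$ for $n\geq1$. Taking $n=1$ in the differential-recursion relation already settles the claim about $G$: since $p_1'$ is a non-zero constant, $G=(p_1')^{-1}\bigl(\mathcal{A}_1\,p_2+\mathcal{B}_1\,p_1+\mathcal{C}_1\,p_0\bigr)$ is automatically a polynomial of degree $\leq2$. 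If $\deg G=0$, then a degree count forces $\mathcal{A}_n=\mathcal{B}_n=0$, so $p_n'$ is a scalar multiple of $p_{n-1}$ for all $n$; together with the recurrence relation this pins the $p_n$ down as the Hermite polynomials up to affine scaling, so from now on I may assume $1\leq\deg G\leq2$.

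Next I would substitute $\al_n\,p_{n+1}=(x-\be_n)\,p_n-\ga_n\,p_{n-1}$ into the differential-recursion relation to get a structure relation $G(x)\,p_n'(x)=R_n(x)\,p_n(x)+s_n\,p_{n-1}(x)$ with $\deg R_n\leq1$ and $s_n\in\R$ (and $s_n\neq0$ for large $n$, since otherwise $p_n$ would divide $G$). Differentiating this relation and eliminating $p_{n-1}$ and $p_{n-1}'$ by means of the same relation at index $n-1$ and of $\ga_{n-1}\,p_{n-2}=(x-\be_{n-1})\,p_{n-1}-\al_{n-1}\,p_n$ should give, after a short computation, an identity
\begin{equation*}
G(x)^2\,p_n''(x)+G(x)\,Q_n(x)\,p_n'(x)+P_n(x)\,p_n(x)=0,\qquad \deg Q_n\leq1,\quad \deg P_n\leq2 .
\end{equation*}
Since $G$ divides the first two terms, it divides $P_n\,p_n$; evaluating at the zeros of $G$ and using that consecutive $p_n$ have no common zero yields $G\mid P_n$, and dividing by $G$ and comparing the top-degree coefficients shows that $P_n/G$ is a constant $\la_n$. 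Thus each $p_n$ satisfies $A(x)\,p_n''(x)+B_n(x)\,p_n'(x)+\la_n\,p_n(x)=0$ with $A=G$, $\deg A\leq2$ and $\deg B_n\leq1$.

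The hard part will be to show that $B_n$ (and trivially $A$) can be chosen independent of $n$, as is needed to apply Bochner's Theorem~\ref{thm:Bochner}. I would do this by writing the differential equation for $p_{n+1}$, substituting the recurrence relation and its derivative, reducing everything to the linear span of $\{p_n,p_n',p_{n-1},p_{n-1}'\}$, and matching the coefficients of $x^{n+1}$ and $x^n$: this should force the linear parts of all the $B_n$ to coincide, with the remaining constant removable by an affine change of the variable $x$. Conceptually this amounts to recognising $A\tfrac{d^2}{dx^2}+B\tfrac{d}{dx}$ as the classical second order operator attached to the (necessarily classical) orthogonality measure $w\,dx$ of the $p_n$, characterised by a Pearson-type equation $(Aw)'=Bw$; carrying out this $n$-independence step cleanly, without imposing extra regularity on the measure, is where I expect most of the work to lie.

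Once $A(x)\,p_n''(x)+B(x)\,p_n'(x)+\la_n\,p_n(x)=0$ holds for all $n$ with one and the same pair $A,B$, Bochner's Theorem~\ref{thm:Bochner} gives that, up to affine scaling, the $p_n$ are Jacobi, Laguerre, Hermite or Bessel polynomials, or monomials. The Bessel polynomials and the monomials are not orthogonal on the real line with respect to a positive measure (see \cite[\S 4.10]{Isma}), so the hypothesis excludes them, leaving the Jacobi, Laguerre and Hermite polynomials (corresponding to $\deg G=2$, $1$, $0$). It then remains only to read off the explicit differential-recursion relations in the statement from the classical Rodrigues formulas --- or, equivalently, from the three-term recurrences --- for these three families, which is a routine verification.
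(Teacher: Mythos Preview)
The paper does not supply a proof of Theorem~\ref{thm:AlSalamChihara}; it is quoted as a classical characterization result, with references to the original paper \cite{AlSaC} and the survey \cite{AlSaOP1990}, and is used only as input for the tridiagonalization procedure in \S\ref{ssec:thmBochnerAlSalamChihara}. So there is nothing in the paper to compare your argument against.

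That said, your outline is essentially the classical route: convert the structure relation into a second-order equation with polynomial coefficients for each $p_n$ and then invoke Bochner's Theorem~\ref{thm:Bochner}, ruling out the Bessel and monomial cases by the positivity of the orthogonality measure. You have also correctly identified the real work, namely showing that the first-order coefficient $B_n$ is independent of $n$. Two technical points in your sketch deserve a closer look before you declare it complete. First, the divisibility step $G\mid P_n$ via evaluation at the roots of $G$ is not quite immediate: when $G$ has a double root you must also match a derivative there, and for a fixed simple root $x_0$ of $G$ it can happen that $p_n(x_0)=0$ for some particular $n$, so the argument ``$P_n(x_0)p_n(x_0)=0\Rightarrow P_n(x_0)=0$'' needs to be organized so as not to rely on $p_n(x_0)\neq 0$ for every $n$ (using interlacing or the Christoffel--Darboux identity fixes this). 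Second, in the $\deg G=0$ case your conclusion that $p_n'=c_n\,p_{n-1}$ forces Hermite is itself a (Hahn--Sonine type) characterization theorem; it is true, but you should either cite it or derive it, since it is not strictly a consequence of the three-term recurrence alone. With these points tightened, the plan is sound.
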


\begin{remark}\label{rmk:thmAlSalamChihara}
The Al-Salam--Chihara classification Theorem \ref{thm:AlSalamChihara}  
concerns orthogonal
polynomials, so that the Bessel polynomials and the monomials
do not occur in the list. However, the Bessel polynomials and the
monomials satisfy a differential-recursion relation of the
form
\begin{equation*}
\begin{split}
x^2 \frac{d}{dx} y_n(x;a,b) &\, = {\mathcal A}^{a,b}_n\, y_{n+1}(x;a,b) +
{\mathcal B}^{a,b}_n\, y_n(x;a,b) +
{\mathcal C}^{a,b}_n\, y_{n-1}(x;a,b), \qquad
\frac{d}{dx} x^n \,= \, n x^{n-1}.
\end{split}
\end{equation*}
So Bochner's Theorem \ref{thm:Bochner} and the 
Al-Salam--Chihara Theorem \ref{thm:AlSalamChihara}
deal with the same sets of polynomials. 
\end{remark}

As noted in Remark \ref{rmk:thmTDoperatorsaretridiagonalizable}(i), 
by an affine transformation 
we can assume $A(0)=0$ in \eqref{eq:Las2ndorderODE}, and
then $L$ is tridiagonalized by the monomials, cf. 
Theorem \ref{thm:TDoperatorsaretridiagonalizable} and its proof.  
However, there is choice in the polynomials leading
to tridiagonalization. 
Using Bochner's Theorem \ref{thm:Bochner}, 
the Al-Salam--Chihara Theorem \ref{thm:AlSalamChihara}
and Remark \ref{rmk:thmAlSalamChihara} one can proceed as
follows to tridiagonalize the operator $L$: first 
use Bochner's Theorem \ref{thm:Bochner} to get rid
of the second order derivative; secondly use
Theorem \ref{thm:AlSalamChihara} to get rid of the 
the first order derivative. Note that there many choices available.
Firstly by using an affine
transformation; secondly by choosing the parameters in case of
the Jacobi, Laguerre, or Bessel polynomials; and thirdly
in the possible decomposition of the polynomial
$A$ as a product of two lower order polynomials. 
We give an example of this procedure when discussing
the Lam\'e equation. 

\subsection{Symmetric second order differential equations}\label{ssec:symmetricsecordde}

We now consider the case of 
$L = M_A \, \frac{d^2}{dx^2} + M_B\, \frac{d}{dx} 
+ M_C$ being symmetric on a Hilbert space $\cH= L^2\bigl( (a,b), w(x)dx\bigr)$,
where $-\infty \leq a<b\leq \infty$ and $w(x)>0$ on $(a,b)$. Recall
that we assume $\C[x] \hookrightarrow \cH$ as a dense subspace.

\begin{lemma}\label{lem:diffeqLsymmetric}
Assume $A$, $B$ and $C$ are real-valued polynomials on $\R$. 
Moreover, assume $w\in C^1(a,b)$, and $(Aw)'=Bw$, then
$L$, with domain $D(L)\, =\, C^\infty_c(a,b)$, is a symmetric 
operator on $\cH = L^2\bigl( (a,b), w(x)dx\bigr)$.
\end{lemma}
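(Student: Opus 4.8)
The plan is to verify the defining identity $\langle Lf, g\rangle = \langle f, Lg\rangle$ for all $f,g\in C^\infty_c(a,b)$ by first putting the second-order part of $L$ into formal Sturm--Liouville form and then integrating by parts. The key algebraic observation is that the hypothesis $(Aw)'=Bw$ allows one to write, for $f\in C^\infty_c(a,b)$,
\begin{equation*}
A(x)\, \frac{d^2f}{dx^2} + B(x)\, \frac{df}{dx} \; = \; \frac{1}{w(x)}\, \frac{d}{dx}\!\left( A(x)\, w(x)\, \frac{df}{dx}\right),
\end{equation*}
which is legitimate because $A$ is a polynomial and $w\in C^1(a,b)$, so $Awf'\in C^1(a,b)$; expanding the right-hand side and substituting $(Aw)'=Bw$ recovers $Af''+Bf'$. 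Hence on $C^\infty_c(a,b)$ we have $L = \frac1w\frac{d}{dx}\bigl(Aw\frac{d}{dx}\bigr) + M_C$. Before the main computation I would also record that for $f\in C^\infty_c(a,b)$ the function $Lf$ is continuous with support inside that of $f$, hence lies in $\cH = L^2\bigl((a,b),w(x)dx\bigr)$; so indeed $C^\infty_c(a,b)\subset D(L)$ and $L$ maps this domain into $\cH$, as required for the statement to make sense.

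Next I would compute, for $f,g\in C^\infty_c(a,b)$, using the rewriting above and unravelling the weighted inner product,
\begin{equation*}
\langle Lf, g\rangle \; = \; \int_a^b \frac{d}{dx}\!\bigl(Awf'\bigr)\, \overline{g}\, dx \; + \; \int_a^b C\, f\, \overline{g}\, w\, dx ,
\end{equation*}
and integrate by parts in the first integral. Since $f$ and $g$ have compact support in the \emph{open} interval $(a,b)$, the boundary contributions vanish identically --- this is exactly why the domain is chosen to be $C^\infty_c(a,b)$, which sidesteps every question about the behaviour of $w$ or $A$ at the endpoints (including the case where $A$ vanishes there). What remains is
\begin{equation*}
\langle Lf, g\rangle \; = \; -\int_a^b A\, w\, f'\, \overline{g'}\, dx \; + \; \int_a^b C\, f\, \overline{g}\, w\, dx .
\end{equation*}
Because $A$, $w$ and $C$ are real-valued, both terms on the right are conjugate-symmetric under interchange of $f$ and $g$: $-\int_a^b Awf'\overline{g'}\,dx = \overline{-\int_a^b Awg'\overline{f'}\,dx}$ and $\int_a^b Cf\overline{g}w\,dx = \overline{\int_a^b Cg\overline{f}w\,dx}$. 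Therefore $\langle Lf,g\rangle = \overline{\langle Lg,f\rangle} = \langle f,Lg\rangle$, which is the assertion.

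I do not expect a genuine obstacle here; the proof is essentially a single integration by parts. The only points needing a little care are the justification of the Sturm--Liouville rewriting under only the $C^1$ hypothesis on $w$ (handled by the smoothness of the polynomial $A$ and of the test function $f$), and the insistence on working over the open interval so that compact support truly eliminates the boundary terms regardless of the endpoint behaviour of $w$ and $A$. One may add the remark that symmetry on $C^\infty_c(a,b)$, combined with the standing assumption $\C[x]\hookrightarrow\cH$ densely, is what is needed to connect this lemma with the abstract framework of Section~\ref{ssec:symmetricTDoperators} after passing to a suitable common core.
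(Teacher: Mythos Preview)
Your proof is correct and is essentially the same as the paper's: both reduce $\langle Lf,g\rangle$ via one integration by parts and the hypothesis $(Aw)'=Bw$ to the manifestly conjugate-symmetric form $-\int_a^b A w f'\overline{g'}\,dx + \int_a^b C f\overline{g}\,w\,dx$. The only cosmetic difference is that you first package $Af''+Bf'$ into Sturm--Liouville form $\tfrac{1}{w}(Awf')'$ and then integrate, whereas the paper integrates the $Af''$ term directly and lets the $(Aw)'$ contribution cancel the $Bf'$ term afterwards.
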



\begin{proof} For $f,g \in C^\infty_c(a,b)$ we have
\begin{equation}\label{eq:lemdiffeqLsymmetric1}
\begin{split}
&\, \int_a^b \bigl( Lf\bigr) (x) \overline{g(x)}\, w(x)\, dx  
= \int_a^b \bigl( C(x)f(x) + B(x)f'(x) + A(x)f''(x)\bigr) \overline{g(x)}\, w(x)\, dx
\\ =\, &\,  \int_a^b  C(x)f(x)\overline{g(x)}w(x)\, dx  + 
\int_a^b  f'(x) B(x)\overline{g(x)} w(x)\, dx \\  & \qquad\qquad -  
\int_a^b A(x) f'(x)\overline{g'(x)}w(x)\, dx 
- \int_a^b  f'(x)\overline{g(x)} \bigl( Aw\bigr)'(x)\, dx
\\ =\, &\,  \int_a^b  C(x)f(x)\overline{g(x)}w(x)\, dx  
 - \int_a^b A(x) f'(x)\overline{g'(x)}w(x)\, dx
\end{split}
\end{equation}
since $(Aw)'=Bw$. The right hand side of 
\eqref{eq:lemdiffeqLsymmetric1} yields the symmetry. 
\end{proof}

Assuming the conditions of Lemma \ref{lem:diffeqLsymmetric} 
on the weight function $w$, we can write, for $f,g \in \C[x]$,
\begin{equation*}
\begin{split}
\langle L\, f, g\rangle =&\,  
\int_a^b C(x) f(x)\overline{g(x)}\, w(x)\, dx - 
\int_a^b A(x) f'(x) \overline{g'(x)}\, w(x)\, dx  \\
&\, + A(b)w(b)\, f'(b)\overline{g(b)} - A(a)w(a)\, f'(a)\overline{g(a)},
\end{split}
\end{equation*}
so that Lemma \ref{lem:diffeqLsymmetric} has the following
analogue in case the domain $D(L)=\C[x]$ is considered. 

\begin{lemma}\label{lem:diffeqLsymmetricpols}
Assume the conditions of Lemma \ref{lem:diffeqLsymmetric} on $A$, $B$, $C$
and $w$ hold. Moreover, assume
$Aw$ has a zero in $a$ and $b$, which has to 
be interpreted for $a=\infty$, respectively $b=-\infty$, as
$\lim_{x\to \infty} w(x)p(x)=0$, respectively $\lim_{x\to -\infty} w(x)p(x)=0$,
for all polynomials $p$.  
Then $L$, with domain $D(L)=\C[x]$, is a symmetric 
operator on $\cH = L^2\bigl( (a,b), w(x)dx\bigr)$.
\end{lemma}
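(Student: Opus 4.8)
The plan is to take the displayed identity for $\langle L\,f,g\rangle$ that was derived just before the statement and show that, under the extra hypothesis that $Aw$ vanishes at $a$ and $b$, the two boundary terms $A(b)w(b)f'(b)\overline{g(b)}$ and $A(a)w(a)f'(a)\overline{g(a)}$ both drop out. Once they are gone, the remaining expression $\int_a^b C f\overline{g}\,w\,dx - \int_a^b A f'\overline{g'}\,w\,dx$ is manifestly symmetric in $f$ and $g$ (the first integrand is symmetric because $C$ is real-valued, the second because $A$ and $w$ are real-valued), so $\langle L\,f,g\rangle = \langle f, L\,g\rangle$ for all $f,g\in\C[x]$, which is exactly the assertion. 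So the real content is the vanishing of the boundary terms.

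For finite endpoints this is immediate: if $a$ is finite, then $A(a)w(a)=0$ by hypothesis, and $f'(a)$, $g(a)$ are finite since $f,g$ are polynomials, so the term vanishes; similarly at $b$. For an infinite endpoint, say $b=\infty$, one rewrites $A(b)w(b)f'(b)\overline{g(b)}$ as the limit $\lim_{x\to\infty} A(x)w(x)f'(x)\overline{g(x)}$; here $A f'\overline{g}$ is a polynomial $p$, so the hypothesis $\lim_{x\to\infty}w(x)p(x)=0$ for all polynomials $p$ gives exactly that this limit is $0$. The case $a=-\infty$ is handled the same way with the other limit hypothesis. One should perhaps remark that the displayed formula for $\langle L\,f,g\rangle$ was obtained by integration by parts on a finite interval and then passing to the limit, so it is worth noting that these same limit hypotheses justify that passage (the boundary contributions from integration by parts are precisely the terms being controlled); alternatively one simply invokes the displayed identity as already established in the text.

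I would write this out in three short moves: (1) quote the displayed identity for $\langle L\,f,g\rangle$ valid for $f,g\in\C[x]$ under the hypotheses of Lemma \ref{lem:diffeqLsymmetric}; (2) show the boundary terms vanish, splitting into the finite-endpoint case (where $A(a)w(a)=0$ directly) and the infinite-endpoint case (where $Af'\overline{g}$ is a polynomial and the stated limit condition applies); (3) observe the surviving bilinear form is symmetric because $A$, $C$, $w$ are real, hence $\langle L\,f,g\rangle=\overline{\langle L\,g,f\rangle}=\langle f,L\,g\rangle$, and since $\C[x]$ is dense in $\cH$ this makes $(L,\C[x])$ a symmetric operator. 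I do not expect any genuine obstacle here; the only mildly delicate point is being careful about what "$Aw$ has a zero at an infinite endpoint" means and making sure the interpretation given in the statement (vanishing of $w$ times every polynomial) is exactly what is needed to kill the boundary term, which it is since $Af'\overline{g}$ ranges over polynomials as $f,g$ range over $\C[x]$.
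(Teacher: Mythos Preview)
Your proposal is correct and follows exactly the approach the paper takes: the paper derives the displayed identity with boundary terms immediately before the lemma and then states the lemma as its direct consequence, leaving the vanishing of the boundary terms (and hence the symmetry of the surviving form) implicit. Your write-up simply makes explicit the finite-versus-infinite endpoint distinction that the paper's formulation of the hypothesis already signals.
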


The first order differential equation for the weight function 
$w$ in Lemma \ref{lem:diffeqLsymmetric} is rewritten as
\begin{equation}\label{eq:firstorderodeweight}
(\ln w)'= \frac{w'}{w} = \frac{B-A'}{A},
\end{equation}
i.e. the logarithmic derivative of $w$ is a rational 
function for which the degree of the numerator polynomial
is at most $2$ and the degree of the denominator
polynomial is at most $3$. Depending on the structure of
the rational function the differential equation 
\eqref{eq:firstorderodeweight} can be solved 
straightforwardly using a partial fraction decomposition. 
The solution of \eqref{eq:firstorderodeweight}
will very much depend on the relation between the polynomials
$A$ and $B$.

E.g. in the special case $A'=B$ we see
that we can take $w(x)=1$, and Lemma
\ref{lem:diffeqLsymmetric} applies, and from  
Lemma \ref{lem:diffeqLsymmetricpols} we see that $L$ with $D(L)=\C[x]$ 
is symmetric on $L^2\bigl( (a,b), dx\bigr)$
if $a$ and $b$ are different zeroes of the polynomial $A$, $a<b$.

\subsection{Schr\"odinger equation with Morse potential}\label{ssec:SchrodingerMorsepotential}

The Schr\"odinger equation with Morse potential is studied by
Broad \cite{Broa-LNM} and Diestler \cite{Dies} in the study of 
a larger system of coupled equations used in
modelling atomic dissocation. The Schr\"odinger equation with Morse potential
is used to model a two-atom molecule in this larger system. 

The Schr\"odinger equation with Morse potential is
\begin{equation}\label{eq:SchrodingerMorsepotential}
-\frac{d^2}{dx^2} + q, \qquad q(x) = b^2(e^{-2x}-2e^{-x}), 
\end{equation}
which is an unbounded operator on $L^2(\R)$. Here $b>0$
is a constant. 
It is a self-adjoint operator with respect to its form domain,
see \cite[Ch.~5]{Sche} and $\lim_{x\to\infty} q(x) =0$, and
$\lim_{x\to-\infty} q(x) =+\infty$. 
Note $\min (q)=-b^2$, so that the discrete spectrum is contained
in $[-b^2,0]$ and it consists of isolated points.
We look for solutions to $-f''(x) + q(x)f(x)=\ga^2 f(x)$.
Put $z= 2be^{-x}$ so that $x\in\R$ corresponds to $z\in(0,\infty)$,
and let $f(x)$ correspond to $\frac{1}{\sqrt{z}} g(z)$, then
\begin{equation}\label{eq:Whittakereq}
g''(z) + \frac{(-\frac14 z^2+bz +\ga^2 +\frac14)}{z^2} g(z) = 0.
\end{equation}
which is precisely the Whittaker equation 
with $\kappa=b$, $\mu=\pm i\ga$, and
the Whittaker integral transform gives the spectral decomposition
for this Schr\"{o}dinger equation, see \cite[\S~IV]{Fara}.
In particular, depending on the value of $b$ the
Schr\"odinger equation has finite discrete spectrum, i.e.
bound states, see the Plancherel formula \cite[\S~IV]{Fara}, and
in this case the Whittaker function terminates and can be written
as a Laguerre polynomial of type $L_m^{(2b-2m-1)}(x)$, 
for those $m\in \N$ such that $2b-2m>0$.

The Schr\"{o}dinger operator is transformed into a TD-operator, and
a particularly nice basis in which the operator is tridiagonal
is obtained by Broad \cite{Broa-LNM} and Diestler \cite{Dies}.
Put $N=\# \{ n\in\N \, |\,  n < b-\frac12 \}$, i.e. 
$N= \lfloor b+\frac12 \rfloor$, so that $2b-2N > -1$, and we assume
for simplicity $b\notin \frac12+\N$. 
Let $T\colon L^2(\R)\to L^2((0,\infty); z^{2b-2N}e^{-z}dz)$ be the map
$(Tf)(z) = z^{N-b-\frac12}e^{\frac12 z}\, f(\ln(2b/z))$, then
$T$ is unitary, and
$T\bigl(-\frac{d^2}{dx^2} + q\bigr) T^\ast=L$ with
$L= M_A \frac{d^2}{dz^2} + M_B \frac{d}{dz} + M_C$ with
$A(z)=-z^2$, $B(z)=(2N-2b-2 + z)z$, $C(z)=-(N-b-\frac12)^2 + z(1-N)$. 
Using the second-order differential equation, see 
e.g. \cite[(4.6.15)]{Isma}, \cite[(1.11.5)]{KoekS}, \cite[(5.1.2)]{Szeg}, for
the Laguerre polynomials, cf. Bochner's Theorem \ref{thm:Bochner}, the three-term recurrence
relation for the Laguerre polynomials, 
see e.g. \cite[(4.6.26)]{Isma}, \cite[(1.11.3)]{KoekS}, \cite[(5.1.10)]{Szeg},
and the differential-recursion formula as in Theorem 
\ref{thm:AlSalamChihara} for the Laguerre polynomials we find that this
operator is tridiagonalized by the Laguerre polynomials
$L_n^{(2b-2N)}$. When we translate this back to the Schr\"{o}dinger
operator we started with we obtain 
\begin{equation}\label{eq:defyn}
y_n(x) = (2b)^{(b-N+\frac12)}\sqrt{\frac{n!}{\Ga(2b-2N+n+1)}}
e^{-(b-N+\frac12)x} e^{-be^{-x}}
\, L^{(2b-2N)}_n(2be^{-x}) 
\end{equation}
as an orthonormal basis for $L^2(\R)$ such that 
\begin{equation}\label{eq:ttrpart13}
\begin{split}
\Bigl(-\frac{d^2}{dx^2} + q\Bigr) y_n \, = &
\, -(1-N+n)\sqrt{(n+1)(2b-2N+n+1)}\, y_{n+1}  \\ 
\, &+\, \Bigl( -(N-b-\frac12)^2 + (1-N+n)(2n+2b-2N+1) - n  \Bigr)\, y_n \\
\, &\, -(n-N) \sqrt{n(2b-2N+n)}\,\,  y_{n-1}.
\end{split}
\end{equation}
Note that \eqref{eq:ttrpart13} is written in a symmetric 
tridiagonal form. 

The space $\cH^+$ spanned by $\{ y_n\}_{n=N}^\infty$ and the
space $\cH^-$ spanned by $\{ y_n\}_{n=0}^{N-1}$ are invariant with
respect to $-\frac{d^2}{dx^2} + q$ which follows 
from \eqref{eq:ttrpart13}.
Note that $L^2(\R) = \cH^+ \oplus \cH^-$, $\dim (\cH^-)=N$.

In order to determine the spectral properties of the
Schr\"{o}dinger operator in this way we follow the approach
of Theorem \ref{thm:spectrumofLinfinitezerosofAn}.
We first consider
its restriction on the finite-dimensional invariant subspace
$\cH^-$. We look for eigenfunctions $\sum_{n=0}^{N-1} P_n(z)\, y_n$ for
eigenvalue $z$, so we need to solve 
\begin{equation*}
\begin{split}
z\, P_n(z) &\, = (N-1-n)\sqrt{(n+1)(2b-2N+n+1)}\, P_{n+1}(z)  
\\ &\qquad + \, \Bigl( -(N-b-\frac12)^2 + (1-N+n)(2n+2b-2N+1) - n  \Bigr)\, P_n(z)  
\\ &\qquad +  (N-n) \sqrt{n(2b-2N+n)}\,  P_{n-1}(z), \qquad 0\leq n\leq N-1.
\end{split}
\end{equation*}
which corresponds to some orthogonal polynomials on a finite 
discrete set. These polynomials are expressible in terms
of the dual Hahn polynomials, see \cite[\S 6.2]{Isma}, \cite[\S 1.6]{KoekS}, and we find that
$z$ is of the form $-(b-m-\frac12)^2$, $m$ a nonnegative integer
less than $b-\frac12$, and 
\begin{equation*}
\begin{split}
&\, P_n(-(b-m-\frac12)^2) = \sqrt{\frac{(2b-2N+1)_n}{n!}}
\, R_n(\la(N-1-m); 2b-2N,0, N-1), 
\end{split}
\end{equation*}
using the notation of \cite[\S 6.2]{Isma}, \cite[\S 1.6]{KoekS}. 
Since we have now two expressions for the
eigenfunctions of the Schr\"{o}dinger operator for a 
specific simple eigenvalue, we obtain, 
after simplifications,
\begin{equation}\label{eq:expansionLaguerreanddualHahn}
\begin{split}
&\, \sum_{n=0}^{N-1} \, R_n(\la(N-1-m); 2b-2N,0, N-1)\,
L^{(2b-2N)}_n(z)\,   =\, C\,   z^{N-1-m} \, L^{(2b-2m-1)}_m(z), \\ 
&\, C\, = \, (-1)^{N+m+1} \left( (N+m-2b)_{N-1-m} \binom{N-1}{m}\right)^{-1}
\end{split}
\end{equation}
where the constant $C$ can be determined by e.g. considering leading
coefficients on both sides. Using the orthogonality relations
\cite[(1.6.2)]{KoekS} of the dual Hahn polynomials, 
\eqref{eq:expansionLaguerreanddualHahn} can be inverted. 

On the invariant subspace $\cH^+$ we look for formal eigenvectors 
$\sum_{n=0}^\infty P_n(z)\ y_{N+n}(x)$ for the eigenvalue $z$. 
This leads to the recurrence relation 
\begin{equation*}
\begin{split}
z\, P_n(z) &\, =  -(1+n)\sqrt{(N+n+1)(2b-N+n+1)}\,  P_{n+1}(z) \\
&\qquad + \bigl( -(N-b-\frac12)^2 +(1+n)(2n+2b+1)- n-N\bigr)\, P_n(z) \\
&\qquad
-n \sqrt{(N+n)(2b-N+n)}\, P_{n-1}(z).
\end{split}
\end{equation*}
This corresponds with the three-term recurrence relation for the
continuous dual Hahn polynomials, see \cite[\S 1.3]{KoekS}, 
with $(a,b,c)$ replaced by  $(b+\frac12, N-b+\frac12, b-N+\frac12)$, and
note that the coefficients $a$, $b$ and $c$ are strictly positive. 
We find, with $z=\ga^2\geq 0$ 
\begin{equation*}
\begin{split}
P_n(z) &\, = 
\frac{S_n(\ga^2;b+\frac12, N-b+\frac12, b-N+\frac12)}
{n! \sqrt{(N+1)_n\, (2b-N+1)_n}} 
\end{split}
\end{equation*}
and these polynomials satisfy 
\begin{equation*}
\begin{split}
&\, \int_0^\infty P_n(\ga^2) P_m(\ga^2) \, w(\ga)\, d\ga = \de_{n,m}, 
\\ 
&\, w(\ga) =\frac{1}{2\pi\, N!\, \Ga(2b-N+1)} \left| \frac{\Ga(b+\frac12 +i\ga)\Ga(N-b+\frac12 +i\ga)\Ga(b-N+\frac12 +i\ga)}{\Ga(2i\ga)}\right|^2.
\end{split}
\end{equation*}
Note that the series $\sum_{n=0}^\infty P_n(\ga^2)\, y_{N+n}$ diverges in 
$\cH^+$ (as a closed subspace of $L^2(\R)$). Using 
the results on spectral decomposition of Jacobi operators, we 
obtain the spectral decomposition
of the Schr\"{o}dinger operator restricted
to $\cH^+$ as 
\begin{equation}
\begin{split}
&\, \Upsilon\colon  \cH^+ \to L^2((0,\infty); w(\ga)\, d\ga), 
\qquad \bigl( \Upsilon y_{N+n}\bigr)(\ga) = P_n(\ga^2), \\
&\, \langle ( -\frac{d^2}{dx^2} + q ) f, g\rangle = 
\int_0^\infty \ga^2 (\Upsilon f)(\ga)  \overline{(\Upsilon g)(\ga)}\, w(\ga)\, d\ga
\end{split}
\end{equation}
for $f,g\in \cH^+\subset L^2(\R)$ such that $f$ is in the domain
of the Schr\"odinger operator. 

In this way we have obtained the spectral decomposition of the Schr\"odinger
operator on the invariant subspaces $\cH^-$ and $\cH^+$, where 
the space $\cH^-$ is spanned by the bound states, i.e. by the eigenfunctions
for the negative eigenvalues, and $\cH^+$ is the reducing subspace on
which the  Schr\"odinger
operator has spectrum $[0,\infty)$. 
The link between the two approaches for the discrete spectrum is
given by \eqref{eq:expansionLaguerreanddualHahn}. For the
continuous spectrum it leads to the fact that the Whittaker
integral transform maps Laguerre polynomials to continuous
dual Hahn polynomials, and we can interpret
\eqref{eq:expansionLaguerreanddualHahn} also in this way.
For explicit formulas we refer to 
\cite[(5.14)]{Koor-LNM}. Koornwinder \cite{Koor-LNM} generalizes
this to the case of the Jacobi function transform mapping 
Jacobi polynomials to Wilson polynomials, which in turn has been 
generalized by Groenevelt \cite{Groe} to the Wilson function
transform mapping Wilson polynomials to Wilson 
polynomials.

\subsection{Lam\'e equation}\label{ssec:Lame}
The classical Lam\'e equation is 
$\frac{d^2 F}{du^2}(u) - \bigl( m(m+1) \wp(u) + E \bigr)F(u)=0$. 
Here $\wp$ is the Weierstra\ss\ $\wp$-function, which is 
a doubly-periodic function with periods
$2\om_1$, $2\om_2$ (and $\frac{\om_1}{\om_2}\not\in\R$).
We do not yet assume a condition on $m\in \R$, but note
the symmetry $m\leftrightarrow-m-1$. This equation is
very classical, and it is studied in \cite[\S 23]{WhitW}
in detail. 
Put 
$x=\wp(u)$, and $F(u)=f(\wp(u))$ then 
\begin{equation}\label{eq:Lameoperator}
\begin{split}
& A(x) \frac{d^2f}{dx^2}(x) 
+ B(x) \frac{df}{dx}(x)  - \frac14 \bigl( m(m+1)x + E\bigr) f(x) = 0, \\
& A(x) = (x-e_1)(x-e_2)(x-e_3), \\ & B(x) = 
\frac12\Bigl( (x-e_2)(x-e_3) + (x-e_1)(x-e_3) + (x-e_1)(x-e_2)
\Bigr).
\end{split}
\end{equation}
Note that the $e_i$'s are all different, where we follow the notation as 
in Whittaker and Watson \cite[\S 20, \S 20.32]{WhitW} for $\wp$, $e_i$, etc. 
In the form \eqref{eq:Lameoperator} it is a TD-operator. 
In \cite[\S 23.41]{WhitW} a related procedure is discussed which
leads to solutions of \eqref{eq:Lameoperator} for specific values
of $E$ by inserting descending power series in $x-e_2$.  
Another
classical line of study is to allow for a degree $p+1$ polynomial
in front of the second order derivative and a degree $p$ polynomial
in front of the first order derivative
in \eqref{eq:Lameoperator} and next to look for a polynomial, known
as the Van Vleck polynomial, of
degree $p-1$ in front of $f(x)$ in \eqref{eq:Lameoperator} such
that \eqref{eq:Lameoperator} has a polynomial solution $S(x)$,
known as the Heine-Stieltjes polynomial, see 
\cite[\S 6.8]{Szeg} and references for this line of
considerations. 

As noted \eqref{eq:Lameoperator} is a TD-operator,
which we now tridiagonalize. In light of Bochner's Theorem 
\ref{thm:Bochner}, the Al-Salam--Chihara Theorem \ref{thm:AlSalamChihara}
and the procedure sketched in \S \ref{ssec:thmBochnerAlSalamChihara}, 
we first use an affine transformation 
$x=ay+b$, $a=\frac12 (e_1-e_2)$, $b=\frac12(e_1+e_2)$, so
that $y=1$ corresponds $x=e_1$, $y=-1$ corresponds $x=e_2$. 
Note that we can use any other permutation of the points $e_1$, $e_2$ and
$e_3$. This yields 
\begin{equation*}
\begin{split}
&(y-1)(y+1)(y - \al)\frac{d^2g}{dy^2}(y)  
+ \frac12 \Bigl( (y+1)(y-\al) + (y-1)(y-\al) + (y-1)(y+1)\Bigr) \frac{dg}{dy}(y)  \\
&- \frac14 \bigl( m(m+1)(y+\frac{b}{a}) + E \bigr)g(y)=0
\end{split}
\end{equation*}
with $\al= -\frac{e_1+e_2-2e_3}{e_1-e_2}=\frac{3e_3}{e_1-e_2}\not=\pm 1$,
and $g(y)=f(\frac{x-b}{a})$. 
Let us denote the second order differential
operator for $E=0$ by $L$. 
In view of Bochner's Theorem 
\ref{thm:Bochner} and the factor $y^2-1$ in front of the second order
derivative, we try to tridiagonalize the operator using the Jacobi
polynomials $P_n^{(\al,\be)}$ and its second order
differential equation, see  \cite[(4.2.6)]{Isma}, \cite[(1.8.5)]{KoekS}. 
In this way we get rid of the
second order derivative, and collecting the remaining terms 
in front of the first order derivative gives 
\[
(y-\al)\bigl((\be-\al)-y(\al+\be+2)\bigr) 
+ \frac12 \Bigl( (y+1)(y-\al) + (y-1)(y-\al) + (y-1)(y+1)\Bigr),
\]
so that we can use the Al-Salam--Chihara Theorem \ref{thm:AlSalamChihara} 
in case this is a multiple of $(y^2-1)$. So this expression has
to be zero for $y=\pm 1$, and we find for the Jacobi polynomial 
parameters $\al=\be=-\frac12$, i.e. we
have to take the Chebychev polynomials $T_n$ in order to tridiagonalize
the Lam\'e equation. So using the Al-Salam--Chihara Theorem
\ref{thm:AlSalamChihara} and the three-term recurrence for the
Chebychev polynomials $T_n$, see e.g. \cite[\S 4.5]{Isma}, \cite[(1.8.34)]{KoekS}, 
we obtain
\begin{equation}\label{eq:LametridiagbyTn}
\begin{split}
L\, T_n = &\, \frac18 (2n-m)(2n+m+1) T_{n+1} 
+ \Bigl ( -\al n^2  -\frac14 m(m+1) \frac{b}{a}\Bigr) T_n 
\\ &\, +  \frac18(2n+m)(2n-m-1) T_{n-1} 
\end{split}
\end{equation}
for $n\geq 1$ and for  $n=0$ 
\begin{equation}\label{eq:LametridiagbyT0}
L\, T_0 = - \frac14 \bigl( m(m+1)(y+\frac{b}{a})\bigr)T_0 = 
- \frac14 \bigl( m(m+1)\bigr)T_1 - \frac14 \bigl( m(m+1)\frac{b}{a}\bigr)T_0. 
\end{equation}
Note that we cannot consider \eqref{eq:LametridiagbyT0} 
as the special case $n=0$ of \eqref{eq:LametridiagbyTn}. 
Note also that  \eqref{eq:LametridiagbyTn} and \eqref{eq:LametridiagbyT0}
exhibit the symmetry $m \leftrightarrow -m-1$. It is to be noted that
the recurrence \eqref{eq:LametridiagbyTn} can be solved using the
continuous dual $q$-Hahn polynomials, see \cite[\S 1.3]{KoekS}, precisely for the
excluded(!) values $\al=\pm 1$. Now for the
Lam\'e equation we need to solve $L\, \psi = E\, \psi$.

\begin{remark} It should be noted that this relation between the Lam\'e operator
and the Chebychev polynomials is conceptually different from 
a link discussed in Finkel et al. \cite{FinkGLR}, which
is related to the results of Ince \cite{Ince}. 
Finkel et al. \cite{FinkGLR} use the Jacobian version of the
Lam\'e operator, whereas we use the algebraic form, see
\cite[\S 23.4]{WhitW}. Their approach is motivated from the
theory of quasi-exactly solvable Hamiltonians, see 
\cite{GonzKO} for an overview.
\end{remark}

From \eqref{eq:LametridiagbyTn} it is clear that the
Jacobi matrix for the Lam\'e operator splits in case
$m\in \Z$. We will only discuss the case $m\in\N$ is even,
since we obtain a finite dimensional invariant
subspace. This is done in Section \ref{sssec:casemeven}.
In Section \ref{sssec:caseorthonormal} we consider 
a special case in which no coefficients in 
\eqref{eq:LametridiagbyTn}--\eqref{eq:LametridiagbyT0}
vanish and such that we can write 
$L$ in a symmetric form.

\subsubsection{Case $m=2k\in\N$ is even}\label{sssec:casemeven}
Let us first consider the case of $m=2k$ is even, then the 
Lam\'e operator $L$ leaves the $k+1$-dimensional space spanned by
$T_n$, $n=0,\ldots,k$, invariant. We can rewrite 
\eqref{eq:LametridiagbyTn} in this case as
\begin{equation}\label{eq:Lametridiagformeven}
\begin{split}
L T_n = &\Bigl( \frac18 (2n+1)(2n) -\frac18 (2k)(2k+1)\Bigr) T_{n+1}
+ \Bigl ( -\al  n^2  -\frac14 2k(2k+1) \frac{b}{a}\Bigr) T_n \\
&\qquad + \Bigl( \frac18 (2n)(2n-1) -\frac18 2k(2k+1)\Bigr) T_{n-1}, 
\qquad n\geq 1,  \\
= & - \frac14 \bigl( m(m+1)\bigr)T_1(y) - 
\frac14 \bigl( m(m+1)\frac{b}{a}\bigr)T_0(y), \qquad n=0.
\end{split}
\end{equation}
We now look for eigenfunctions $L \sum_{n=0}^k P_n(E) T_n = E \sum_{n=0}^k P_n(E) T_n$,
so we need
\begin{equation*}
\begin{split}
E\, P_0(E) = & \Bigl( \frac14 -\frac18 2k(2k+1)\Bigr) P_1(E)  
- \frac14 \bigl( m(m+1)\frac{b}{a}\bigr)P_0(E), \\
E\, P_n(E) = & \Bigl( \frac18 (2n+2)(2n+1) -\frac18 2k(2k+1)\Bigr) P_{n+1}(E)
+ \Bigl ( -\al n^2  -\frac14 2k(2k+1) \frac{b}{a}\Bigr) P_n(E) \\
&\qquad + 
\Bigl( \frac18 (2n-1)(2n-2) -\frac18 (2k)(2k+1)\Bigr) P_{n-1}(E),
\qquad 1\leq n\leq k, \\
E\, P_k(E) = &  \Bigl ( -\al k^2  -\frac14 2k(2k+1) \frac{b}{a}\Bigr) P_k(E)  
+ \Bigl( \frac18 (2k-1)(2k-2) -\frac18 (2k)(2k+1)\Bigr) P_{k-1}(E).
\end{split}
\end{equation*}
The possible values for the eigenvalue $E$ are determined as follows;
generate the polynomials $P_n$ by the first two equations
starting with $P_0(E)=1$. Stop at $P_{k+1}(E)$, and then
the zeroes of $P_{k+1}(E)$ are the only possible eigenvalues
of the Lam\'e operator restricted to this finite-dimensional
space.  This corresponds nicely to \cite[\S 23.41]{WhitW}.
Note moreover that for $\al\in\R$ the spectral theorem
for orthogonal polynomials \cite{Isma} (also known as
Favard's theorem) is valid,
implying that the polynomials $P_n$, $n=0,\ldots, k$, are
orthogonal with respect to a measure on the real line, so
that there are $k+1$ different
real eigenvalues $E$. 

\subsubsection{Orthonormal version}\label{sssec:caseorthonormal}
Assuming that there are only non-zero coefficients in the three-term
relation \eqref{eq:LametridiagbyTn}--\eqref{eq:LametridiagbyT0} 
we can ask under what conditions
there exists an orthonormal version. 
Assume $m\in (2k+1,2k+2)$ for some $k\in \N$, and 
put $T_n=\al_n p_n$ with
\[
\al_n = \sqrt{\frac{(\frac12(1-m))_n (1+ \frac12 m)_n}{(-\frac12m)_n(\frac12(m+1))_n}}.
\]
Note that the condition on $m$ implies that the argument of the
square root is indeed positive. Then \eqref{eq:LametridiagbyTn}--\eqref{eq:LametridiagbyT0} 
is rewritten as 
\begin{equation}\label{eq:Lameorthonormal}
\begin{split}
L\, p_n = &\, a_n\, p_{n+1} + b_n\, p_n + a_{n-1} \, p_{n-1}\\ 
\\ a_n = &\, \frac12\sqrt{(n+\frac12m+1)(n-\frac12 m+\frac12)(n-\frac12 m) (n+\frac12 m+\frac12)}, 
\\ b_n  =&\,  -\al n^2  -\frac14 m(m+1) \frac{b}{a}, \\
L\, p_0 = &\, 2\,a_0\, p_1 + b_0\, p_0, 
\end{split}
\end{equation}
which can be viewed as a symmetric operator assuming $\al\in\R$ except for the 
the last line in \eqref{eq:Lameorthonormal}. 
At this point it is not clear if the Jacobi form of $L$
as displayed by the first equality in \eqref{eq:Lameorthonormal}
gives rise to an essentially self-adjoint operator or not, since
the coefficients $a_n = {\mathcal O}(n^2)$, $b_n = {\mathcal O}(n^2)$ 
blow up.
Since 
\begin{equation*}
a_n\, = \, \frac12 n^2\bigl( 1 + \frac{1}{n} + \frac14 (\frac12 -m(m+1))\frac{1}{n^2} + \mathcal{O}(\frac{1}{n^4})\bigr),
\qquad n\to\infty
\end{equation*}
we find 
\begin{equation}\label{eq:essselfadjointLame}
a_n\, + \, a_{n-1}\, \pm \, b_n \, = \, (1\mp \al)n^2 + \frac18 - (1\pm \frac{b}{a})m(m+1) + \mathcal{O}(\frac{1}{n^2}),
\end{equation}
so that for any $\al\in\R$ one of the expressions in \eqref{eq:essselfadjointLame} is bounded
from above, so that \cite[Ch.~VII, Thm.~1.4, Cor.]{Bere} implies that $L$ is essentially
self-adjoint in all cases.

Note that in this case
\eqref{eq:firstorderodeweight} gives
\[
(\ln w)' = \frac{B-A'}{A} = -\frac12 \left( \frac{1}{y-1} +
\frac{1}{y+1}  + \frac{1}{y-\al}\right)
\]
so that we take $w$ to be a multiple of 
\[
\frac{1}{\sqrt{(y^2-1)}} \frac{1}{\sqrt{y-\al}}.
\]
Since the coefficients $a_n$ and $b_n$ are unbounded, the orthogonality measure
has unbounded support. In such a case we see that $w$ does not satisfy the 
conditions of Lemma \ref{lem:diffeqLsymmetricpols}, since the moments do not exist.
A further study of the orthonormal case seems to be required. 

\begin{remark}
One is tempted to speculate about the case of the original differential operator  $L$ not being essentially selfadjoint.  We have associated with a tridiagonalizable $L$ a family of orthogonal polynomials. When $L$ is not essentially selfadjoint the corresponding orthogonal polynomials are orthogonal with respect to infinitely many measures, that is  the 
corresponding moment problem   will be indeterminate \cite{Akh}. The orthogonal polynomials will be orthogonal to measures which are not spectral measures of the Jacobi form of $L$.   
\end{remark}

\emph{Acknowledgement.} We thank the referee for useful remarks and comments. 
The research of Mourad E.H. Ismail is supported by a grant from King Saud
University, Saudi Arabia.


\end{document}